 \numberwithin{equation}{section}
\def\bB{{\mathbb{B}}}
\def\bC{{\mathbb{C}}}
\def\bR{{\mathbb{R}}}
\def\R{{\mathbb{R}}}
\def\bZ{{\mathbb{Z}}}
\def\bN{{\mathbb{N}}}
\def\cD{{\mathscr{D}}}
\def\cE{{\mathscr{E}}}
\def\cH{{\mathscr{H}}}
\def\ve{\varepsilon}
\renewcommand{\d}{{\partial}}
\def\lec{\lesssim}
\def\gec{\gtrsim}
\DeclareMathOperator{\diam}{diam}
\def\dim{\mathop\mathrm{dim}} 					
\def\dist{\mathop\mathrm{dist}} 						
\newcommand{\ps}[1]{\left( #1 \right)}
\newcommand{\bk}[1]{\left[ #1 \right]}
\newcommand{\ck}[1]{\left\{#1 \right\}}
\newcommand{\av}[1]{\left| #1 \right|}
\newcommand{\cnj}[1]{\overline{#1}}
\def\warrow{\rightharpoonup}
\def\XXint#1#2#3{{\setbox0=\hbox{$#1{#2#3}{\int}$ }
\vcenter{\hbox{$#2#3$ }}\kern-.58\wd0}}
\theoremstyle{plain}
\newtheorem*{maintheorem}{Main Theorem}
\newtheorem{theorem}{Theorem}
\newtheorem{corollary}[theorem]{Corollary}
\newtheorem{lemma}[theorem]{Lemma}
\theoremstyle{definition}
\newtheorem{definition}[theorem]{Definition}
\newtheorem{remark}[theorem]{Remark}
\numberwithin{equation}{section}
\numberwithin{theorem}{section}
  \DeclareFontFamily{U}{mathb}{\hyphenchar\font45} 
\DeclareFontShape{U}{mathb}{m}{n}{
      <5> <6> <7> <8> <9> <10> gen * mathb
      <10.95> mathb10 <12> <14.4> <17.28> <20.74> <24.88> mathb12
      }{}
\DeclareSymbolFont{mathb}{U}{mathb}{m}{n}
\DeclareMathSymbol{\toitself}{3}{mathb}{"FD}  
\begin{document}

\title[Dimension drop for harmonic measure]{Dimension drop for harmonic measure on Ahlfors regular boundaries}

\author[Azzam]{Jonas Azzam}

\address{Jonas Azzam\\
School of Mathematics \\ University of Edinburgh \\ JCMB, Kings Buildings \\
Mayfield Road, Edinburgh,
EH9 3JZ, Scotland.}
\email{j.azzam "at" ed.ac.uk}

\begin{abstract}
We show that given a domain $\Omega\subseteq \R^{d+1}$ with uniformly non-flat Ahlfors $s$-regular boundary with $s\geq d$, the dimension of its harmonic measure is strictly less than $s$. 
\end{abstract}

\subjclass[2010]{31A15, 28A75, 28A78, 31B05, 35J25}

\maketitle

\tableofcontents

\section{Introduction}

The purpose of this note is to study the dimension of harmonic measure $\omega_{\Omega}$ for a  connected domain  $\Omega\subseteq \R^{d+1}$. Naturally, $\dim \omega_{\Omega} \leq \dim \d\Omega$, but the inequality can be strict. For example, it is a classical result of Jones and Wolff \cite{JW88} that if $\Omega\subseteq \bC$, then the dimension is always at most 1, even if $\dim \d\Omega>1$ (which improves on an earlier result of Makarov for simply connected planar domains \cite{Mak85}). In higher dimensions, the analogous property is no longer true: there are domains called {\it Wolff snowflakes} in $\R^{d+1}$ whose harmonic measure can be strictly larger or strictly less than $d$; the $d=2$ case is due to Wolff \cite{Wol95}, and the general case is Lewis, Verchota and Vogel in \cite{LVV05} (note that even though the dimension can be above $d$, a result of Bourgain says that the dimension harmonic measure for any domain in $\bR^{d+1}$ can't get too close to $d+1$ \cite{Bou87}, and it is an open problem to determine what the supremal dimension can be). While these are all very non-trivial results, these {\it Wolff snowflakes} actually have some nice geometry. In particular, they are two-sided uniform domains. We say a domain $\Omega$ is {\it $C$-uniform} if for all $x,y\in \Omega$ there is a curve $\gamma\subseteq \Omega$ so that 
\[
\cH^{1}(\gamma)\leq C|x-y|\]
and
\[
\dist(z,\Omega^{c})\geq C^{-1} \min\{\ell(x,z),\ell(y,z)\}
\]
where $\ell(a,b)$ denotes the length of the subarc of $\gamma$ between $a$ and $b$. A domain is {\it two-sided} uniform if both $\Omega$ and $\cnj{\Omega}^{c}$ are $C$-uniform domains for some $C$. 

Since two-sided uniform domains have boundaries with dimension at least $d$ and a Wolff snowflake can have dimension less than $d$, this example also shows that a dimension drop for harmonic measure can occur for a domain that is quite nice in terms of its connectivity and boundary properties. Hence, it is an interesting problem to identify some general criteria for when a whole class of domains $\Omega$ satisfy $\dim \omega_{\Omega}<\dim \d\Omega$. 

A dimension drop for harmonic measure occurs for some domains whose boundaries have some self-similar structure. This phenomenon was first observed by Carleson \cite{Car85} for complements of planar Cantor sets whose boundaries have dimension at least 1 (rather, he showed for a particular class of Cantor sets $C$, $\dim\omega_{\Omega}<1$). Later, Jones and Wolff showed the same result but for uniformly perfect sets satisfying a certain uniform disconnectedness property (see \cite{JW88} or \cite[Section X.I.2]{Harmonic-Measure}). Makarov and Volberg showed $\dim \omega_{\Omega}<\dim \d\Omega$ when $\d\Omega$ belongs to a more general class of Cantor sets (with $\dim \d\Omega$ possibly below $1$) \cite{MV86} and then to Cantor repellers of any dimension \cite{Vol92}, that is, sets $K$ for which there are smooth disjoint domains $U_{i}\subseteq \bC$ compactly contained in a domain $U\subseteq \bC$ and univalent maps $f_{i}:U_{i}\rightarrow U$ for which $K$ is the unique compact set such that $K=\bigcup f_{i}(U)$ (and in Volberg's result, two of the maps need to be linear). Urba'{n}ski and Zdunik have also shown that the attractors of conformal iterated function systems (IFS) have a dimension drop when either the limit set is contained in a real-analytic curve, if the IFS consists of similarities only, or if the IFS is irregular (see \cite{UZ02}). See also \cite{Mey09,Pop98}.

A common thread to many of these results is etiher some uniform disconnectivity property (see equations (XI.2.1)-(XI.2.3) in \cite{Harmonic-Measure}, \cite{JW88}, \cite[Lemma 2.5]{Bat96}, and \cite[Lemma 5]{Car85}), or some self-similar or ``dynamically defined" structure \cite{MV86,Vol92,Vol93}. In the latter case, self-similarity allowed authors to exploit some ergodic theory, except the work of Batakis \cite{Bat96} which gave a non-ergodic proof of dimension drop for a wide class of Cantor sets  that also works in higher dimensions, and later studied how the dimension is continuous with respect to the parameters defining the Cantor set \cite{Bat00,Bat06}. 

In the present paper, we develop a different sufficient condition for when the harmonic measure is strictly less than the dimension of the boundary. Though it assumes some strong conditions on the Hausdorff measure on the boundary, it requires no self-similar structure or uniform-disconnectedness.  Instead, it assumes some uniform non-flatness condition (which will hold for any self-similar set in $\R^{d+1}$ of dimension at least $d$ that isn't a $d$-dimensional plane), and also holds in higher dimensions.

\begin{maintheorem}
Given $d\in \bN$, $C_{1}>0$, and $\beta>0$, there are constants $s_{0}<d$ and $\kappa\in (0,1)$ so that the following holds. Let $s_{0}<s\leq d+1$ and $\Omega\subseteq \R^{d+1}$ be a connected domain whose boundary is $C_{1}$-Ahlfors $s$-regular, meaning if $\sigma=\cH^{s}|_{\d\Omega}$, then 
\begin{equation}
\label{e:adr}
C_{1}^{-1} r^{s} \leq \sigma(B(x,r)) \leq C_{1}r^{s} \mbox{ for all }x\in \d\Omega, \;\; 0<r<\diam \d\Omega. 
\end{equation}
Also suppose there is $\beta>0$ so that 
\def\bbeta{b\beta}
\begin{multline}
\label{e:bigbeta}
\bbeta_{\omega_{\Omega}}(x,r):=\inf_{V}\bk{\sup_{y\in B(x,r)\cap \d\Omega}\frac{\dist(x,V)}{r}+\sup_{y\in V\cap B(x,r)}\frac{\dist(y,\d\Omega)}{r}}\\
\geq \beta>0
 \end{multline}
 where the infimum is over all $d$-dimensional planes $V\subseteq \R^{d+1}$. Then $\dim \omega_{\Omega}<\kappa s$, meaning there is a set $K$ with $\dim K<\kappa s$ so that $\omega_{\Omega}(K^{c})=0$. 

\label{thmi}
\end{maintheorem}

\def\maintheorem{{\hyperref[thmi]{Main Theorem }}}

Some remarks are in order. Firstly, this theorem does not cover all the fractals considered by Batakis, which may or may not satisfy \eqref{e:adr}. Secondly, nor does it tackle sets all sets with dimension $s<d$. However, the domains Batakis and others considered need their boundaries to be totally disconnected or be defined in some recursive way, whereas the domains we consider can be connected and quite random. Also notice that if $s>d$, then \eqref{e:adr} implies there is $\beta>0$ depending on $s$ so that \eqref{e:bigbeta} holds, so \eqref{e:bigbeta} is not needed in this case to guarantee a dimension drop; however, if \eqref{e:bigbeta} holds for some $\beta$ and $s>d$ is close enough to $d$ depending on $\beta$, we can have that $\dim \omega_{\Omega}<d$.

Some domains with $2$-dimensional boundaries  in $\R^{3}$ not covered by previous results that are admissible for the above theorem are the complements of the tetrahedral Sierpinski Gasket and $C\times[0,1]$ where $C$ is the $1$-dimensional $4$-corner Cantor set. Note their boundaries are either connected or have large connected components. Some boundaries with no self-similar structure include bi-Lipschitz images of these sets, or a snowflaked image of $\R^{2}$ in $\R^{3}$ (that is, the image of a map $f:\R^{2}\rightarrow \R^{3}$ satisfying $|f(x)-f(y)|\sim |x-y|^{1/s}$ with $s>1$).

The proof of the \maintheorem is modelled after that of \cite{Bat96} and relies on a trick introduced by Bourgain in \cite{Bou87}. Similar to those papers, the first step we need to take is to show that inside any ``cube" on the boundary of our domain, the $s$-dimensional density of harmonic measure dips (or increases) inside a sub-cube of comparable size (compare the bottom of page 480 in  \cite{Bou87} or  \cite[Lemma 2.7]{Bat96}). After showing this, the proof is similar to those above: this dip causes harmonic measure to concentrate elsewhere in the cube (see \cite[Lemma 2]{Bou87} or \cite[Lemma 2.8]{Bat96}) and one can iterate this to show that harmonic measure is in fact supported on a set of dimension less than $s$. 

This density drop is easier to show when $s>d$ using a touching-point argument, and is more quantitative. To prove the density drop allowing for $s=d$ (or slightly smaller than $d$) assuming non-flatness, we use a compactness argument to show that, if this weren't true, then we could find a domain with $d$-regular boundary such that the density of its harmonic measure was uniformly bounded over all small balls on the boundary, but then harmonic measure would be absolutely continuous with respect to $d$-dimensional Hausdorff measure. This gives us a lot of structural information by the following result:

\begin{theorem}\label{t:AHM3TV}
\cite[Theorem 1.1]{AHMMMTV16} Let $d\geq 1$ and $\Omega\subsetneq\R^{d+1}$ be an open  connected set
and let $\omega:=\omega^p$ be the harmonic measure in $\Omega$ where $p$ is a fixed point in $\Omega$.
Let $E\subset\partial\Omega$ be a subset with Hausdorff measure $\cH^d(E)<\infty$. Then:
\begin{itemize}
\item[(a)] If $\omega$ is absolutely continuous with respect to $\cH^d$ on $E$, then $\omega|_E$ is $d$-rectifiable, in the sense that  $\omega$-almost all of $E$ can be covered by a countable union of $n$-dimensional Lipschitz graphs.

\item[(b)]  If $\cH^d$ is absolutely continuous with respect to $\omega$ on $E$, then $E$ is a $d$-rectifiable set, in the sense that  $\cH^d$-almost all of $E$ can be covered by a countable union of $d$-dimensional Lipschitz graphs.
\end{itemize}
\end{theorem}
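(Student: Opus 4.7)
The plan is to deduce both parts from the deep theorem of Nazarov--Tolsa--Volberg, which states that a Radon measure $\mu$ on $\R^{d+1}$ with polynomial $d$-growth $\mu(B(x,r))\lesssim r^{d}$ and whose vector-valued Riesz transforms $\mathcal{R}\mu(x)=\int |x-y|^{-(d+1)}(x-y)\,d\mu(y)$ are bounded in $L^{2}(\mu)$ must be $d$-rectifiable. The strategy, in each case, is therefore to extract a subset of $E$ on which the appropriate candidate measure inherits both polynomial growth and $L^{2}$-boundedness of the Riesz transforms.

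For part (a), I would fix the pole $p$, set $\omega=\omega^{p}$, and use the Radon--Nikodym theorem to write $\omega|_{E}=h\,\cH^{d}|_{E}$ with $0<h<\infty$ $\omega$-a.e. After discarding a $\omega$-null set and decomposing into countably many pieces via Egorov and Lebesgue differentiation, I may assume $h$ is comparable to a fixed constant on a subset $F\subset E$ of positive $\omega$-measure and that $\omega(B(x,r))\le Cr^{d}$ uniformly for $x\in F$ and small $r$. The crucial step is a pointwise Riesz-transform bound: for $\omega$-a.e.\ $x\in F$ the truncated transforms $\mathcal{R}_{\epsilon}(\omega|_{F})(x)$ are uniformly bounded. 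The input here is the identification, up to a constant, of the $d$-dimensional Riesz kernel with the gradient of the Newtonian kernel in $\R^{d+1}$, together with the fact that harmonic measure is (morally) the normal derivative trace of the Green function $G(\cdot,p)$. Combining Caffarelli--Fabes--Mortola--Salsa type Green function/harmonic measure comparisons with a Bourgain-style maximum-principle argument, one obtains uniform control of $\mathcal R_\epsilon(\omega|_F)$ at points where the upper $d$-density of $\omega$ is finite. An application of Nazarov--Tolsa--Volberg to $\omega|_{F}$ then gives its $d$-rectifiability, and exhausting $E$ by countably many such $F$ proves (a).

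For part (b), the outline is analogous with $\nu:=\cH^{d}|_{E}$ as target. Standard density theorems for $\cH^d$ give polynomial growth $\nu(B(x,r))\lesssim r^{d}$ $\cH^{d}$-a.e. on $E$, and the hypothesis $\cH^{d}\ll\omega$ lets us further restrict to a subset $E'\subset E$ on which $d\nu/d\omega$ is comparable to a constant. The Riesz-transform bounds already established for $\omega|_{E'}$ in the argument for (a) transfer to $\nu|_{E'}$: one checks that $\mathcal{R}\nu-\mathcal{R}(f\cdot \omega)$ is controlled pointwise by the density ratio, so boundedness of one implies boundedness of the other on $E'$. Nazarov--Tolsa--Volberg applied to $\nu|_{E'}$ then yields that $E'$ is $d$-rectifiable, and a countable exhaustion finishes the proof.

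The main obstacle is the Riesz-transform estimate on $F$. Because no connectivity or regularity assumption (NTA, uniform, Ahlfors regular) is made on $\Omega$, one cannot directly invoke classical elliptic machinery to compare $\omega$ with surface measure or to control $\nabla G$. Instead, one must perform a delicate extraction of $F$ that carries most of the relevant $\omega$-mass while making the interaction of the singular kernel with the discarded part of $\omega$ negligible; this requires exploiting the subharmonicity of $|\nabla G|^{2}$ and the fine scale-by-scale structure of harmonic measure. Carrying out this extraction compatibly with the polynomial growth and $L^{2}$ bounds required by Nazarov--Tolsa--Volberg is the heart of the proof.
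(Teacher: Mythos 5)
This statement is not proved in the paper at all: it is quoted verbatim from \cite{AHMMMTV16}, so the relevant comparison is with the proof in that reference. At the level of strategy your outline does match it: the proof there also runs through the Nazarov--Tolsa--Volberg rectifiability criterion (polynomial growth plus $L^2$-bounded Riesz transform implies $d$-rectifiability, in the version for measures with positive and finite upper density), after an exhaustion of $E$ by pieces on which the density $d\omega/d\cH^d$ is comparable to a constant, and the whole difficulty is indeed the $L^2$ bound for the Riesz transform of harmonic measure on such a piece. But that is precisely where your proposal stops being a proof: the key estimate is only gestured at, and the specific route you indicate is problematic. Caffarelli--Fabes--Mortola--Salsa type comparisons between $G$ and $\omega$ are simply not available for an arbitrary open connected $\Omega$ (no CDC, no uniformity, no Ahlfors regularity is assumed), as you yourself note two sentences later; the argument in \cite{AHMMMTV16} avoids them, instead using the single-layer representation $\cE(\cdot,p)-G(\cdot,p)=\int \cE(\cdot,y)\,d\omega^{p}(y)$ (with $\cE$ the fundamental solution), positivity of $G$, Bourgain's lemma, and a stopping-time/percolation construction to obtain a pointwise bound for the truncated Riesz transform of $\omega$ in terms of local densities on a large piece, before invoking a suppressed-kernel/$Tb$-type argument and then Nazarov--Tolsa--Volberg. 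So the heart of the theorem --- an entire paper's worth of work --- is missing from the proposal, and the one concrete mechanism you name for it would fail in this generality.

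Two smaller remarks. In part (b) you do not need to transfer Riesz-transform bounds to $\cH^{d}|_{E'}$: writing $\cH^{d}|_{E}=g\,d\omega|_{E}$ and restricting to $\{c\le g\le C\}$ makes $\omega$ and $\cH^{d}$ mutually absolutely continuous with comparable densities there, so part (a) applied to these pieces already covers $\cH^{d}$-almost all of $E$ by Lipschitz graphs; this is the deduction used in \cite{AHMMMTV16} and is cleaner than re-running the NTV criterion for $\cH^{d}|_{E'}$ (though your transfer argument is also valid, since comparable measures share $L^2$ boundedness of the same kernel). Also, in part (a) one must be careful that the growth/density hypotheses needed for the NTV-type theorem concern the full measure $\omega$ near the piece $F$, not just $\omega|_{F}$; controlling the contribution of the discarded mass is part of the stopping-time construction, not a routine Egorov reduction.
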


Thus, the boundary of our domain has tangents, but this violates \eqref{e:bigbeta}.\\

Using compactness arguments for harmonic measure is quite common, and the author first learned of it from the work of Kenig and Toro \cite{KT99}. For more recent applications, see \cite[Section 3]{HMMTZ17}, \cite{BE17}, and \cite{AM15}, and the references therein, which are primarily concerned with uniform domains. See also \cite{AMT17} for an example of compactness arguments used in non-uniform domains.

We don't know whether our result holds for all $s\in (d-1,d+1]$. If $s<d$, the answer to this question relies on knowing whether harmonic measure is always singular with respect to $\cH^{s}$-measure if the boundary is Ahlfors $s$-regular. As far as the author knows ,this is an open question. An answer in the affirmative would mean that the arguments here could be used again to obtain the whole range $(d-1,d+1]$. \\

We would like to thank Mihalis Mourgoglou and Xavier Tolsa for their comments on the manuscript, and for the anonymous referee for spotting several errors and to whom we are indebted for making the paper much clearer.

\section{Preliminaries}

We will let $B(x,r)=\{y: |x-y|\leq r\}$ and $\bB=B(0,1)$. If $B=B(x,r)$, we let $\lambda B=B(x,\lambda r)$, $x_{B}=x$, and $r_{B}=r$. We will denote by $\cH^{s}$ and $\cH^{s}_{\infty}$ the $s$-dimensional Hausdorff measure and Hausdorff content respectively. For a reference on geometric measure theory and Hausdorff measure, see \cite{Mattila}. 

We will write $a\lesssim b $ if there is a constant $C>0$ so that $a\leq C b$ and $a\lesssim_{t} b$ if the constant depends on the parameter $t$. As usual we write $a\sim b$ and $a\sim_{t} b$ to mean $a\lesssim b \lesssim a$ and 
$a\lesssim_{t} b \lesssim_{t} a$ respectively. We will assume all implied constants depend on $d$ and hence write $\sim$ instead of $\sim_{d}$.

Whenever $A,B\subset\mathbb{R}^{d+1}$ we define
\[
\mbox{dist}(A,B)=\inf\{|x-y|;\, x\in A, \, y\in B\}, \, \mbox{and}\, \, \mbox{dist}(x,A)=\mbox{dist}(\{x\}, A). 
\]
Let $\diam A$ denote the diameter of $A$ defined as
\[
\diam A=\sup\{|x-y|;\, x,y\in A\}.
\]

For a domain $\Omega$ and $x\in \Omega$, we let $\omega_{\Omega}^{x}$ denote the harmonic measure for $\Omega$ with pole at $x$ and $G_{\Omega}(\cdot,\cdot)$ the associated Green function. For a reference on harmonic measure and the Green function, see \cite{AG}.

Given a domain $\Omega$ and a ball $B$ centered on $\d\Omega$, we say $x$ is a {\it $c$-corkscrew point} for $B\cap \Omega$ if $B(x_{B},cr_{B})\subseteq B\cap \Omega$. 

We will say a domain $\Omega$ has {\it lower $s$-content regular complement} with constant $c_{1}$ if for all $B$ centered on $\d\Omega$ and $0<r_{B}<\diam\d\Omega$,
\begin{equation}
\label{e:lowerd}
\cH^{d}_{\infty}(B\backslash \Omega)\geq c_{1} r_{B}^{d}
\end{equation}

The following lemma is due to Bourgain for $\bR^{3}$ in \cite[Lemma 1]{Bou87}. The proof in $\bR^{d+1}$ is identical and shown in \cite[Lemma 3.4]{AHMMMTV16}.

\begin{lemma} 
\label{l:bourgain}
If $\Omega\subseteq \R^{d+1}$ has lower $s$-content regular complement with constant $c_{1}$ and $s>d-1$, then there is $b\in (0,1)$ so that 
\begin{equation}
\label{e:bou}
\omega_{\Omega}^{x}(B(\xi,r))\gec c_{1} \;\;\mbox{ for }\xi\in \d\Omega, \;\; 0<r<\diam \d\Omega,\;\; \mbox{ and }x\in B(\xi,b r).
\end{equation}
\end{lemma}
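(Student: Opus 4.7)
The plan is to follow Bourgain's potential-theoretic strategy. From the bulk of $B(\xi, r) \setminus \Omega$ I would manufacture a nonnegative superharmonic function $\phi$, harmonic in $\Omega$, with a quantitative lower bound inside $B(\xi, r/2)$, a matching uniform upper bound on $\R^{d+1}$, and fast decay far from $\xi$. Its Poisson representation in $\Omega$ then converts these estimates into a lower bound on $\omega_\Omega^x(B(\xi, \rho r))$ for a suitable $\rho$, and rescaling delivers the lemma.

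By the lower $s$-content regularity and Frostman's lemma, there is a positive Borel measure $\mu$ on $E := B(\xi, r) \setminus \Omega$ with total mass $\mu(\R^{d+1}) \gtrsim c_1 r^s$ and $\mu(B(z, t)) \leq t^s$ for every $z \in \R^{d+1}$ and $t > 0$. Define
\[
\phi(x) := \int \frac{d\mu(y)}{|x-y|^{d-1}}
\]
(replacing $|x-y|^{1-d}$ by $-\log|x-y|$ if $d = 1$). Up to a positive dimensional constant, $\phi$ is the Newtonian potential of $\mu$; hence it is nonnegative, superharmonic on $\R^{d+1}$, and harmonic off $\supp\mu \subseteq \overline{\Omega^c}$. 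In particular $\phi$ is harmonic in $\Omega$.

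Three pointwise estimates drive the argument. A standard dyadic decomposition of the integral together with the Frostman bound yields the uniform upper bound $\phi(x) \lesssim r^{s-d+1}$ for all $x \in \R^{d+1}$; here the key geometric series $\sum 2^{k(d-1-s)}$ converges precisely because $s > d-1$. Second, for $x \in B(\xi, r/2)$ the trivial bound $|x-y| \leq 3r/2$ on $\supp\mu$ gives
\[
\phi(x) \geq (3r/2)^{1-d}\mu(\R^{d+1}) \gtrsim c_1 r^{s-d+1}.
\]
Third, for $y$ with $|y-\xi| \geq \rho r$ ($\rho \geq 2$), the estimate $|y-z| \gtrsim |y-\xi|$ on $\supp\mu$ yields the decay $\phi(y) \lesssim |y - \xi|^{1-d} r^s$, so that integrating over dyadic shells $\{|y - \xi| \sim 2^k r\}$ with $2^k \geq \rho$ and summing (using $\omega_\Omega^x \leq 1$) gives $\int_{|y-\xi| \geq \rho r} \phi \, d\omega_\Omega^x \lesssim \rho^{1-d} r^{s-d+1}$.

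Since $\phi$ is bounded and harmonic in $\Omega$, the Poisson representation $\phi(x) = \int_{\partial\Omega} \phi\, d\omega_\Omega^x$ applies. Splitting the integral at $B(\xi, \rho r) \cap \partial\Omega$ and using the first and third estimates on the two pieces, then combining with the second, yields for $x \in B(\xi, r/2) \cap \Omega$
\[
c\,c_1 r^{s-d+1} \leq \phi(x) \leq Cr^{s-d+1}\,\omega_\Omega^x(B(\xi, \rho r)) + C\rho^{1-d}r^{s-d+1}.
\]
Choosing $\rho$ large enough (depending on $c_1$, $s$, $d$) so that $C\rho^{1-d} \leq \tfrac12 c\,c_1$ forces $\omega_\Omega^x(B(\xi, \rho r)) \gtrsim c_1$; applying the entire argument at scale $r/\rho$ in place of $r$ then gives the conclusion with $b := 1/(2\rho) \in (0,1)$. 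The main obstacle is the careful dyadic summation behind the uniform upper bound — exactly where the hypothesis $s > d-1$ enters — together with adapting the tail estimate to the logarithmic kernel in the planar case $d = 1$ (where the summable geometric tail is replaced by a logarithmic one); justifying the Poisson representation up to a possibly rough boundary is routine.
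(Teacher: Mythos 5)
Your proposal reproduces the standard proof of Bourgain's lemma from \cite[Lemma 1]{Bou87} and \cite[Lemma 3.4]{AHMMMTV16}, which are precisely the references the paper cites in lieu of proving the lemma itself: Frostman measure on $B(\xi,r)\setminus\Omega$, Riesz potential of that measure, the three potential estimates (uniform upper bound via the dyadic sum using $s>d-1$, lower bound near $\xi$, far-field decay), and the mean-value identity against harmonic measure. The one spot that deserves a touch more care than ``routine'' is the planar case $d=1$, where $-\log|x-y|$ changes sign and decays to $-\infty$, so one should work with a normalized kernel such as $\log^+\bigl(Cr/|x-y|\bigr)$ to keep $\phi\ge 0$ and to salvage the tail estimate; this is a known and minor modification, and you do flag it, so the argument is essentially complete.
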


A domain $\Omega$ satisfies the {\it capacity density condition (CDC)} if, for all $x\in \d\Omega$ and $0<r<\diam \d\Omega$, 
\[\textup{Cap}({B}(x,r) \cap \Omega^c, B(x,2r)) \gtrsim r^{n-1},\] 
where Cap$(\cdot,\cdot)$ stands for the variational $2$-capacity of the condenser $(\cdot,\cdot)$ (see \cite[p. 27]{HKM} for the definition).

\begin{remark}
We will state some lemmas below that assume the CDC, but keep in mind the CDC is implied by \eqref{e:lowerd} for $s>d-1$. This can be seen from \cite[Lemma 2.31]{HKM}). Alternatively, Ancona showed in \cite[Lemma 3]{Anc86} that the CDC is equivalent to the property that, for some $\alpha>0$,
\[
\omega_{\Omega\cap B}^{x}(\d B\cap \Omega) \lec \ps{\frac{|x-x_{B}|}{r}}^{\alpha} \;\;\mbox{ for }x\in B\cap \Omega
\]
for all balls $B$ centered on $\d\Omega$ with $r_{B}<\diam \d\Omega$. This property is implied by Bourgain's lemma (see for example \cite[Lemma 2.3]{AM18}).  In particular, the maximum principle implies the following lemma.
\end{remark}

\begin{lemma} \label{l:holder}
Let $\Omega\subsetneq\R^{d+1}$ be an open set that satisfies the CDC and let $x\in \d\Omega$. Then there is $\alpha>0$ so that for all $0<r<\diam(\Omega)$,
\begin{equation}\label{e:holder}
 \omega_{\Omega}^{y}({B}(x,r)^{c})\lesssim \ps{\frac{|x-y|}{r}}^{\alpha},\quad \mbox{ for all } y\in \Omega\cap B(x,r),
 \end{equation}
where $\alpha$ and the implicit constant depend on $n$ and the CDC constant.
\end{lemma}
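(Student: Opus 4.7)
The plan is to invoke Ancona's characterization of the CDC recalled in the preceding Remark essentially verbatim, with only the maximum principle on the auxiliary domain $\Omega \cap B(x,r)$ added on top. No iteration is required: the Ancona estimate already has the Hölder shape we want, so the whole argument is one application of the maximum principle followed by one citation.

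First I would dispose of the trivial case $r \geq \diam \partial\Omega$: then $\partial\Omega \subseteq B(x,r)$, so $\omega_{\Omega}^{y}(B(x,r)^{c}) = 0$ and there is nothing to prove. So assume $r < \diam \partial\Omega$. Next, set $v(z) := \omega_{\Omega}^{z}(B(x,r)^{c})$, which is a bounded harmonic function on $\Omega$ with $0 \leq v \leq 1$. Since the CDC implies Wiener regularity at every point of $\partial\Omega$, $v$ extends continuously to $\partial\Omega$ and vanishes on $B(x,r) \cap \partial\Omega$.

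Now apply the harmonic measure representation (equivalently, the maximum principle) on the sub-domain $U := \Omega \cap B(x,r)$. Its boundary decomposes into the spherical piece $\partial B(x,r) \cap \Omega$, where $v \leq 1$, and the portion $\partial\Omega \cap B(x,r)$, where $v = 0$. Hence, for $y \in U$,
\[
v(y) = \int_{\partial U} v \, d\omega_{U}^{y} \leq \omega_{U}^{y}\bigl(\partial B(x,r) \cap \Omega\bigr).
\]
Applying Ancona's estimate from the Remark to the ball $B(x,r)$ (centered on $\partial\Omega$ with radius $r < \diam \partial\Omega$) yields
\[
\omega_{U}^{y}\bigl(\partial B(x,r) \cap \Omega\bigr) \lesssim \ps{\frac{|y-x|}{r}}^{\alpha},
\]
which is precisely \eqref{e:holder}.

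The only thing to verify is that one genuinely has access to Ancona's equivalence, but this is exactly what the Remark records as a theorem of Ancona (with the small-radius Bourgain-type bound supplied by \Lemma{bourgain} via \eqref{e:lowerd} whenever one wants to reverse-engineer the CDC from lower content regularity). So the plan has no real obstacle: every ingredient — Wiener regularity of boundary points, continuity of $v$ up to $\partial\Omega$, the maximum principle on $U$, and Ancona's estimate — is standard under the stated CDC hypothesis.
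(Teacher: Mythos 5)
Your proof is correct and follows exactly the route the paper itself indicates: the preceding Remark records Ancona's Hölder estimate $\omega_{\Omega\cap B}^{y}(\partial B\cap\Omega)\lesssim(|y-x_B|/r_B)^{\alpha}$ as equivalent to the CDC, and the paper's entire proof of Lemma~\ref{l:holder} is the phrase ``the maximum principle implies the following lemma.'' Your application of the maximum principle on $U=\Omega\cap B(x,r)$ together with Ancona's estimate is precisely that argument spelled out, so there is nothing further to add.
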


\begin{lemma}
\cite[Lemma 1]{Aik08}
For $x\in \Omega\subseteq \bR^{d+1}$ and $\phi\in C_{c}^{\infty}(\bR^{d+1})$, 
\begin{equation}
\label{e:ibp}
\int \phi\omega_{\Omega}^{x} =\phi(x)+ \int_{\Omega} \triangle \phi(y) G_{\Omega}(x,y)dy.
\end{equation}
\end{lemma}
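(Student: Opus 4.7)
The plan is to verify \eqref{e:ibp} by showing that its right-hand side, as a function of $x$, coincides with the Perron--Wiener--Brelot solution of the Dirichlet problem in $\Omega$ with boundary data $\phi|_{\partial\Omega}$. Define
\[
u(x) := \phi(x) + \int_{\Omega} G_{\Omega}(x,y)\,\triangle\phi(y)\,dy, \qquad x\in\Omega.
\]
Since $H_{\phi}(x):=\int\phi\,d\omega_{\Omega}^{x}$ is, by definition, the Perron solution with data $\phi|_{\partial\Omega}$, it suffices to check that $u$ is a bounded harmonic function on $\Omega$ with $u(x)\to\phi(\xi)$ at every regular boundary point $\xi$, and then to invoke uniqueness of the Perron solution.

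Harmonicity will come out of distribution theory. Testing $u$ against an arbitrary $\psi\in C_{c}^{\infty}(\Omega)$ and applying Fubini (legitimate since $G_{\Omega}(x,y)\lec|x-y|^{1-d}$ is locally integrable while $\triangle\phi$ and $\triangle\psi$ are bounded and compactly supported), one rewrites $\int u\,\triangle\psi\,dx$ in terms of the inner integral $\int G_{\Omega}(x,y)\triangle\psi(x)\,dx$, which equals $-\psi(y)$ by the defining property $-\triangle_{x}G_{\Omega}(\cdot,y)=\delta_{y}$ in $\Omega$. Combined with the identity $\int\phi\,\triangle\psi = \int\triangle\phi\cdot\psi$ from integration by parts, this yields $\int u\,\triangle\psi = 0$. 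Hence $\triangle u = 0$ distributionally on $\Omega$ and Weyl's lemma upgrades this to smooth harmonicity.

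For the boundary behavior, the uniform bound $\int_{\supp\triangle\phi\cap\Omega}G_{\Omega}(x,y)\,dy \lec (\diam\supp\phi)^{2}$ makes $u$ bounded. At a regular point $\xi\in\partial\Omega$, $G_{\Omega}(x,y)\to 0$ as $x\to\xi$ for each fixed $y\in\Omega$; splitting the Green potential between a small ball $B(\xi,\varepsilon)$, whose contribution is $O(\varepsilon^{2}\|\triangle\phi\|_{\infty})$ uniformly in $x$, and its complement, which vanishes by dominated convergence as $x\to\xi$, forces $u(x)\to\phi(\xi)$. Since the set of irregular boundary points is polar, both $u$ and $H_{\phi}$ are bounded harmonic functions on $\Omega$ with identical limits at every regular boundary point, so they must coincide.

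The hardest step will be the final uniqueness claim: without any regularity assumption on $\partial\Omega$, the classical Dirichlet maximum principle does not apply directly, and one has to lean on Bouligand's theorem together with the fact that polar sets carry no harmonic measure. A technically cleaner route, if that step proves delicate, is to first exhaust $\Omega$ by smooth subdomains $\Omega_{j}\nearrow\Omega$, establish \eqref{e:ibp} on each $\Omega_{j}$ via the classical Green second identity in a smooth domain, and then pass to the limit using monotone convergence $G_{\Omega_{j}}\nearrow G_{\Omega}$ together with the convergence $\int\phi\,d\omega_{\Omega_{j}}^{x}\to\int\phi\,d\omega_{\Omega}^{x}$ built into the Perron construction of $\omega_{\Omega}^{x}$.
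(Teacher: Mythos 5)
The paper does not prove this lemma at all; it simply cites \cite[Lemma 1]{Aik08}, so there is no internal proof to compare against. Both routes you sketch are sound, and the second one (exhaust $\Omega$ by smooth subdomains $\Omega_j\nearrow\Omega$, apply Green's second identity there to get $\int\phi\,d\omega_{\Omega_j}^{x}=\phi(x)+\int_{\Omega_j}G_{\Omega_j}(x,y)\triangle\phi(y)\,dy$, then pass to the limit via $G_{\Omega_j}\nearrow G_{\Omega}$ and the Wiener/PWB convergence $\int\phi\,d\omega_{\Omega_j}^{x}\to\int\phi\,d\omega_{\Omega}^{x}$) is essentially the argument in the cited source; it is cleaner precisely because it sidesteps the uniqueness delicacy you flag at the end. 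Your first route, identifying the right-hand side with the Perron solution directly, is also correct in the settings used in this paper, where $\partial\Omega$ is non-polar: harmonicity via the distributional computation and Weyl's lemma is fine, and the uniqueness step does go through once you invoke that irregular boundary points form a polar set carrying no harmonic measure, so two bounded harmonic functions with the same fine boundary limits quasi-everywhere coincide. One small correction worth making: the pointwise bound $G_{\Omega}(x,y)\lec|x-y|^{1-d}$ you use for Fubini and for the $O(\ve^{2})$ ball estimate is only valid for $d\geq 2$; when $d=1$ (planar domains) the Green function blows up logarithmically, so the local estimates become $O(\ve^{2}\log(1/\ve))$. This does not break the argument, but as written the claim is false near the diagonal in the plane, so it should be stated with the logarithmic case handled separately.
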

We will typically use the above lemma when $\phi(x)=0$. 
\begin{lemma}
\label{e:green}
Let $\Omega\subseteq \bR^{d+1}$ be a CDC domain. Let $B$ be a ball centered on $\d\Omega$ with $0<r_{B}<\diam\d\Omega$. Then
\begin{equation}
\label{e:G<w}
G_{\Omega}(x,y)\lec  \frac{\omega_{\Omega}^{x}(4B)}{r_{B}^{d-1}} \mbox{ for all }x\in \Omega\backslash 2B \mbox{ and }y\in B
\end{equation}
\end{lemma}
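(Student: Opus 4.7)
The inequality is a standard Caffarelli--Fabes--Mortola--Salsa/Aikawa-type estimate for CDC domains. The plan is to combine the integration-by-parts identity \eqref{e:ibp} with the boundary H\"older decay of Lemma~\ref{l:holder} and the mean value property for the harmonic function $G_\Omega(x,\cdot)$, splitting on the location of $x$.

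First consider the easier case $x\in\Omega\cap(4B\setminus 2B)$. The trivial Newtonian bound $G_\Omega(x,y)\leq c_d|x-y|^{1-d}\lec r_B^{1-d}$ (dominating $G_\Omega$ by the free-space fundamental solution) combines with $\omega_\Omega^x(4B)\gec 1$ to give the claim. The lower bound on $\omega_\Omega^x(4B)$ comes from Lemma~\ref{l:bourgain} applied at a sub-ball of $4B$ centered near $\d\Omega$, together with a Harnack-chain argument to propagate the estimate across the annular region; this chain is available because $\Omega$ is connected and the CDC provides the requisite boundary regularity.

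For the main case $x\in\Omega\setminus 4B$, the function $u(z):=G_\Omega(x,z)$ is nonnegative, harmonic in $\Omega\cap 4B$, and vanishes continuously on $\d\Omega\cap 4B$ by Wiener regularity. Its Poisson representation in $\Omega\cap 3B$ reads
\[
u(y) = \int_{\d(3B)\cap\Omega} u(z)\, d\omega^y_{\Omega\cap 3B}(z),
\]
which reduces the claim to the pointwise bound $u(z)\lec \omega_\Omega^x(4B)/r_B^{d-1}$ for $z\in \d(3B)\cap\Omega$. For an interior $z$ with $B(z,\rho)\subset\Omega$ at scale $\rho\sim r_B$, applying \eqref{e:ibp} with a cutoff $\phi\in C_c^\infty(B(z,2\rho))$, $\phi=1$ on $B(z,\rho)$ and $|\triangle\phi|\lec\rho^{-2}$ (noting $\phi(x)=0$ since $x\notin 4B$), together with the mean value identity for $G_\Omega(x,\cdot)$ on $B(z,\rho)$, yields $G_\Omega(x,z)\lec \omega_\Omega^x(B(z,2\rho))/\rho^{d-1}\leq \omega_\Omega^x(4B)/r_B^{d-1}$.

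The principal technical obstacle is the treatment of $z\in \d(3B)\cap\Omega$ close to $\d\Omega$, where no interior ball of scale $r_B$ exists. I would cover such $z$ by a Whitney-type collection of balls $B_i=B(z_i,\rho_i)\subset 4B$ with $\rho_i\sim\dist(z_i,\d\Omega)$; the argument above applied to each $B_i$ gives $G_\Omega(x,z_i)\rho_i^{d-1}\lec \omega_\Omega^x(2B_i)$, and the bounded overlap of $\{2B_i\}$ within $4B$ yields $\sum_i \omega_\Omega^x(2B_i)\lec \omega_\Omega^x(4B)$. Meanwhile, Lemma~\ref{l:holder} ensures both that $u(z)$ decays as $z\to\d\Omega$ and that the $\omega^y_{\Omega\cap 3B}$-mass concentrated near $\d\Omega$ is small, so the weighted sum over the Whitney cover converges and delivers the claim.
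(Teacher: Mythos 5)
The paper does not prove this lemma; it points to the proof of Lemma~3.5 in Aikawa--Hirata \cite{AH08} and to Lemma~3.3 of \cite{AHMMMTV16}, so there is no in-paper argument to compare yours against. That said, your sketch has genuine gaps, and the central step is not actually established.

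\textbf{The annulus case.} You assert that for $x\in\Omega\cap(4B\setminus2B)$ one has $\omega_\Omega^x(4B)\gtrsim1$, citing Bourgain plus ``a Harnack-chain argument \dots available because $\Omega$ is connected and the CDC provides the requisite boundary regularity.'' This is not a correct justification: the CDC is a lower bound on the capacity of the complement and furnishes no chain condition whatsoever; CDC domains can have arbitrarily constricted passages. What does work is elementary and has nothing to do with the CDC: if $\delta_\Omega(x)\lesssim r_B$, apply Lemma~\ref{l:bourgain} at the nearest boundary point; if $\delta_\Omega(x)\gtrsim r_B$, chain along the segment from $x$ to its nearest boundary point $\xi$ (since $\delta_\Omega$ is $1$-Lipschitz, $\delta_\Omega(\xi+t(x-\xi))\geq t|x-\xi|$, so balls of radius $\sim\delta_\Omega(x)$ along the segment give a chain of bounded length), then apply Lemma~\ref{l:bourgain} at the chain's endpoint. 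Even so, this lands you on $\omega_\Omega^x(KB)\gtrsim1$ for some $K$ depending on Bourgain's constant $b$, not literally $4B$; you should track that dilation.

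\textbf{The main case is circular.} You claim that for $z$ with $B(z,\rho)\subset\Omega$, applying \eqref{e:ibp} with a bump $\phi$ supported in $B(z,2\rho)$ ``together with the mean value identity'' yields $G_\Omega(x,z)\lesssim\omega_\Omega^x(B(z,2\rho))/\rho^{d-1}$. This estimate is exactly the lemma you are trying to prove, at scale $\rho$; it is the Caffarelli--Fabes--Mortola--Salsa/Aikawa bound, and it does \emph{not} fall out of the stated ingredients. The identity \eqref{e:ibp} gives
\[
\int\phi\,d\omega_\Omega^x=\int_\Omega\triangle\phi(w)\,G_\Omega(x,w)\,dw,
\]
but $\triangle\phi$ changes sign on the transition annulus, so there is no direct way to isolate $G_\Omega(x,z)$ on the right with a favorable sign; indeed, when $\supp\phi\subset\Omega$ both sides are identically zero and the manipulation collapses to the mean value property. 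The actual proofs in the cited literature use a maximum-principle comparison between $G_\Omega(\cdot,y')$ and $\omega_\Omega^\cdot(4B)$ (or $r_B^{1-d}$ times it) on the domain $\Omega$ with a corkscrew ball around $y'$ deleted, together with Bourgain's lemma on the sphere bounding the deleted ball; that is where the real work is. Merely writing down the IBP identity and invoking ``the mean value'' leaves the lemma unproved.

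\textbf{The patch near $\d\Omega$ does not close.} Your Whitney-cover and H\"older-decay step is meant to handle $z\in\partial(3B)\cap\Omega$ close to $\d\Omega$. But Lemma~\ref{l:holder} only gives $G_\Omega(x,z)\lesssim(\delta_\Omega(z)/r_B)^\alpha\,\sup_{2B\cap\Omega}G_\Omega(x,\cdot)\lesssim(\delta_\Omega(z)/r_B)^\alpha\,r_B^{1-d}$, and you would need the right-hand side to be $\lesssim\omega_\Omega^x(4B)/r_B^{d-1}$, i.e.\ $(\delta_\Omega(z)/r_B)^\alpha\lesssim\omega_\Omega^x(4B)$. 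Since $\omega_\Omega^x(4B)$ can be arbitrarily small when $x$ is far from $B$, the decay alone cannot deliver this; you again need the CFMS bound at an interior corkscrew point to feed into the H\"older estimate. So the claimed ``weighted sum over the Whitney cover'' does not actually converge to the desired bound without the ingredient from the previous paragraph.

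In short: the ingredients you list are the right ones, but the proposal leans on precisely the estimate it is supposed to prove, and the Harnack-chain justification is wrong in substance. The intended proof is the one in Lemma~3.5 of \cite{AH08} (or Lemma~3.3 of \cite{AHMMMTV16}), which you should read and adapt rather than reconstruct from the IBP formula alone.
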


The above lemma has evolved over many years and this isn't the most general statement, but it will suit our purposes. It follows from the proof of Lemma 3.5 in \cite{AH08}: it is assumed there that the domain is John (and so in particular bounded) but is not necessary for the above statement. A version of this is also shown in \cite{AHMMMTV16} that works for general bounded domains without the CDC (and implies the above inequality), but it is only for $d>1$.

Recall that a Harnack chain between two points $x,y\in \Omega$ is a sequence of balls $B_{1},...,B_{n}$ for which $2B_{i}\subseteq \Omega$. By Harnack's inequality,  there is $M>0$ so that for any non-negative harmonic function $u$ on $\Omega$, 
\begin{equation}
\label{e:harnack}
u(x)\leq M^{n} u(y).
\end{equation}

\begin{lemma}\label{l:limlem}
\cite[Lemma 2.9]{AMT17} Let $\Omega_{j}\subset \bR^{d+1}$ be a sequence of domains with lower $s$-content regular complements, $s>d-1$, $0\in \d\Omega_{j}$, $\inf \diam \d\Omega_{j}>0$, and suppose there is a ball $B_0=B(x_{0}, r_0) \subset \Omega_j$ for all $j \geq1$. Then there is a connected open set $\Omega_{\infty}^{x_{0}}$ containing $B$ so that, after passing to a subsequence,
\begin{enumerate}
\item $G_{\Omega_{j}}(x_{0},\cdot)\rightarrow G_{\Omega_{\infty}^{x_{0}}}(x_{0},\cdot)$ uniformly on compact subsets of $\{x_{0}\}^{c}$,
\item   $\omega_{\Omega_{j}}^{x_{0}}\warrow \omega_{\Omega_{\infty}^{x_{0}}}^{x_{0}}$, and
\item  $\Omega_{\infty}^{x_{0}}$ also has lower $s$-content regular complement with the same constant.
\end{enumerate}
\end{lemma}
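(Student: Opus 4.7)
The plan is to build the limit domain from a Hausdorff-type limit of the complements, and then transport the Green function estimates through the limit. Since $0\in \d\Omega_{j}$ and $\inf\diam \d\Omega_{j}>0$, the closed sets $\Omega_{j}^{c}$ are non-degenerate and meet every sufficiently large ball around $0$. First I would use the Blaschke selection theorem on each compact ball $\overline{B(0,R)}$ and a diagonal extraction to find a subsequence (which I relabel) for which $\Omega_{j}^{c}$ converges in the Hausdorff distance on compact subsets of $\R^{d+1}$ to a closed set $F$; since $B_{0}\subseteq \Omega_{j}$, we have $B_{0}\cap F=\emptyset$, so I can define $\Omega_{\infty}^{x_{0}}$ to be the connected component of $F^{c}$ containing $B_{0}$, which is open and connected by construction.

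Next I would establish conclusion (3). The lower $s$-content regularity of $\Omega_{j}^{c}$ passes to the Hausdorff limit: Hausdorff content is lower semicontinuous on a sequence of compact sets converging in Hausdorff distance when one restricts to a fixed ball, so for any $B$ centered on $\d\Omega_{\infty}^{x_{0}}\subseteq F$ and any $r_{B}<\diam F$, one can find balls $B_{j}$ centered on $\d\Omega_{j}$ with $B_{j}\to B$ and use $\cH^{s}_{\infty}(B_{j}\cap \Omega_{j}^{c})\geq c_{1}r_{B_{j}}^{s}$ to deduce $\cH^{s}_{\infty}(B\cap F)\geq c_{1} r_{B}^{s}$. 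In particular each $\Omega_{j}$ satisfies the CDC uniformly (since $s>d-1$), and hence so does $\Omega_{\infty}^{x_{0}}$.

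For (1) I would extend $u_{j}:=G_{\Omega_{j}}(x_{0},\cdot)$ by $0$ to $\Omega_{j}^{c}$. On compact subsets of $\R^{d+1}\setminus \{x_{0}\}$, the $u_{j}$ are uniformly bounded: away from $x_{0}$ they are harmonic, and near the boundary Lemma \ref{l:holder} combined with Lemma \ref{e:green} gives a uniform Hölder modulus of continuity up to $\d\Omega_{j}$ (with exponent depending only on $d$ and the CDC constant). Interior elliptic estimates then give uniform local $C^{2}$ bounds on any compact subset of $F^{c}\setminus\{x_{0}\}$. By Arzelà–Ascoli and a diagonal argument, after passing to a further subsequence $u_{j}\to u_{\infty}$ uniformly on compact subsets of $\{x_{0}\}^{c}$, with $u_{\infty}$ harmonic on $\Omega_{\infty}^{x_{0}}\setminus\{x_{0}\}$, vanishing off $\Omega_{\infty}^{x_{0}}$, and having the correct fundamental-solution singularity at $x_{0}$ (since $u_{j}(y)-\Gamma(x_{0},y)$ are uniformly bounded near $x_{0}$, the limit shares the same singular part). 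The CDC of $\Omega_{\infty}^{x_{0}}$ guarantees regularity of every boundary point, so $u_{\infty}$ is the Green function $G_{\Omega_{\infty}^{x_{0}}}(x_{0},\cdot)$, which proves (1).

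For (2), I would apply the representation \eqref{e:ibp}: for $\phi\in C_{c}^{\infty}(\R^{d+1})$ with $\phi(x_{0})=0$,
\[
\int \phi\, d\omega_{\Omega_{j}}^{x_{0}}=\int_{\R^{d+1}} \triangle\phi(y)\, u_{j}(y)\, dy,
\]
where we have silently used that $u_{j}\equiv 0$ outside $\Omega_{j}$. The integrand is supported in the fixed compact set $\supp\triangle\phi$; on that set $u_{j}\to u_{\infty}=G_{\Omega_{\infty}^{x_{0}}}(x_{0},\cdot)$ uniformly (if $x_{0}\notin \supp\phi$) or with a dominated singularity at $x_{0}$ (using that $G_{\Omega_{j}}(x_{0},y)\lesssim |x_{0}-y|^{1-d}$ uniformly in $j$, which is integrable). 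Dominated convergence then gives $\int \phi\, d\omega_{\Omega_{j}}^{x_{0}}\to \int\phi\, d\omega_{\Omega_{\infty}^{x_{0}}}^{x_{0}}$ for every such $\phi$, and standard arguments (test functions with $\phi(x_{0})\ne 0$ reduce to the above by subtracting a constant and using unit mass) upgrade this to weak convergence.

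The main obstacle is the identification step in (1): showing that the pointwise/uniform limit $u_{\infty}$ really is the Green function of $\Omega_{\infty}^{x_{0}}$, rather than merely a nonnegative harmonic function with the right pole. This is where the uniform CDC hypothesis is essential, since it forces boundary regularity of $\Omega_{\infty}^{x_{0}}$ and allows one to show that $u_{\infty}$ continuously attains the value $0$ on $\d\Omega_{\infty}^{x_{0}}$ via Bourgain-type Hölder decay that is stable under the Hausdorff limit.
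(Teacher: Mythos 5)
The paper does not actually prove this lemma; it cites \cite[Lemma 2.9]{AMT17} and only remarks that the proof there, written for CDC domains, in fact only uses the lower content regularity of the complement. So there is no ``paper's own proof'' to compare against line by line; what follows compares your sketch against the argument in \cite{AMT17}, which is indeed of the same general shape as yours (Hausdorff limit of complements, Arzel\`a--Ascoli for the Green functions via uniform CDC/H\"older decay, and the Riesz representation to transport the convergence to harmonic measure).

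Two issues are worth flagging. The first is terminological but easy to trip over: for compact sets $K_j\to K$ in the Hausdorff metric, $\cH^s_\infty$ is \emph{upper} semicontinuous, i.e.\ $\limsup_j \cH^s_\infty(K_j)\leq \cH^s_\infty(K)$ (any open cover of $K$ eventually covers $K_j$). You call this lower semicontinuity, but you do apply the inequality in the correct direction, so the conclusion $\cH^s_\infty(B\cap F)\geq c_1 r_B^s$ stands.

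The second is a genuine gap in the identification step, and it is precisely the point you flag as ``the main obstacle'' but then dismiss too quickly. You define $\Omega_\infty^{x_0}$ as the connected component of $F^c$ containing $B_0$, and then assert that the limit $u_\infty$ vanishes off $\Omega_\infty^{x_0}$. That $u_\infty=0$ on $F$ is clear, since $u_j\equiv 0$ on $\Omega_j^c$ and $\Omega_j^c\to F$. But $F^c$ may have other connected components $W$, and nothing you say rules out $u_\infty>0$ somewhere on such a $W$. This matters for conclusion (2): in the representation $\int\phi\,d\omega_{\Omega_j}^{x_0}=\int \Delta\phi\, u_j$, the right-hand side converges to $\int_{\R^{d+1}}\Delta\phi\, u_\infty$, while $\int\phi\,d\omega_{\Omega_\infty^{x_0}}^{x_0}=\int_{\Omega_\infty^{x_0}}\Delta\phi\, G_{\Omega_\infty^{x_0}}(x_0,\cdot)$; if $u_\infty$ carries mass on some other component $W$, these two integrals need not agree, and the claimed weak convergence of harmonic measures fails. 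To close the gap you should either (i) define $\Omega_\infty^{x_0}$ as $\{u_\infty>0\}$ (which is the route effectively taken in \cite{AMT17}) and then verify this set is open, connected, contains $B_0$, and has lower $s$-content regular complement, or (ii) keep your definition and prove that $u_\infty\equiv 0$ on every other component $W$ of $F^c$. The latter can be done: fix $\eta>0$ small with $x_0\notin V_\eta:=\{z:\dist(z,F)<\eta\}$; for $j$ large, $\d V_\eta\subset\Omega_j$ with $\dist(\cdot,\d\Omega_j)\lesssim\eta$ there, so by \Lemma{holder} together with \eqref{e:G<w} one gets $u_j\lesssim \eta^\alpha$ on $\d V_\eta$ uniformly in $j$; since $y$ and $x_0$ lie in different components of $V_\eta^c$ and the extension of $u_j$ by $0$ is subharmonic off $x_0$, the maximum principle on the component of $V_\eta^c$ containing $y$ gives $u_j(y)\lesssim\eta^\alpha$, hence $u_\infty(y)=0$. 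Either way, this step needs to be carried out explicitly; as written the proof of (2) does not go through.
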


This is not how it is stated in \cite{AMT17}, but it follows from the proof. Indeed, the lemma is stated for CDC domains, but they use the fact that CDC domains have lower $s$-content regular complements for a particular dimension and constant, and then use that characterization to prove the lemma.

\section{Rectifiability}

\begin{definition}
A set $E\subseteq \R^{n}$ is {\it $d$-rectifiable} if $\cH^{d}(E)<\infty$ and it may be covered up to $\cH^{d}$-measure zero by a countable union of rotated Lipschitz graphs. 
\end{definition}

For two measures $\mu$ and $\nu$ and a ball $B$, let 
\[
F_{B}(\mu,\nu)=\sup_{f} \av{\int fd\mu-\int fd\nu}
\]
where the supremum is over all nonnegative $1$-Lipschitz functions supported in $B$. 

For $x\in \R^{n}$, $r>0$, $\mu$ a Radon measure, and an affine $d$-dimensional plane $V$, let 
\[
\alpha_{\mu}(x,r,V)=r^{-d-1}\inf_{c>0}F_{B(x,r)}(\mu,c\cH^{d}|_{V})
\]
and let 
\[
\alpha_{\mu}(x,r)=\inf_{V} \alpha_{\mu}(x,r,V)\]
where the infimum is over all $d$-dimensional affine planes $V$. 

\begin{lemma}
\label{l:xavi}
\cite[Lemma 2.1]{Tol15}
If $\mu$ is a Radon measure and $\Gamma$ is a Lipschitz graph, then 
\begin{equation}
\label{e:xavi}
\int_{0}^{1} \alpha_{\mu}(x,r)^{2} \frac{dr}{r} <\infty \mbox{ for $\cH^{d}$-a.e. $x\in \Gamma$}.
\end{equation}
\end{lemma}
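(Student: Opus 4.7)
The plan is to reduce the square-function estimate to two classical ingredients: Dorronsoro's theorem, which controls the flatness of the Lipschitz graph $\Gamma$ at most points and scales, and the Lebesgue differentiation theorem, which controls the local oscillation of the density of $\mu$ with respect to $\cH^{d}|_{\Gamma}$. I first decompose $\mu$ via Radon--Nikodym as $\mu = g\,\cH^{d}|_{\Gamma} + \mu^\perp$ with $\mu^\perp \perp \cH^{d}|_{\Gamma}$. Since $F_{B}(\cdot,\cdot)$ is subadditive in its arguments, choosing $\nu = c\,\cH^{d}|_{V}$ for the absolutely continuous part and $0$ for the singular part yields
\[
\alpha_\mu(x,r,V) \le \alpha_{g\cH^{d}|_{\Gamma}}(x,r,V) + \frac{\mu^\perp(B(x,r))}{r^{d}}.
\]

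For the absolutely continuous part, I would further split the error. At any $\cH^{d}$-a.e.\ $x\in\Gamma$ the tangent plane $V_{x}$ to $\Gamma$ exists (Rademacher), and choosing $V=V_{x}$ and $c=g(x)$ gives
\[
\alpha_{g\cH^{d}|_{\Gamma}}(x,r,V_{x}) \;\lesssim\; \frac{1}{r^{d}}\int_{B(x,r)\cap\Gamma}|g-g(x)|\,d\cH^{d} \;+\; g(x)\,\alpha_{\cH^{d}|_{\Gamma}}(x,r,V_{x}).
\]
The first summand is a Calder\'on--Zygmund-type oscillation of $g$, which is square-summable in $dr/r$ at every Lebesgue point of $g$ on $\Gamma$ by a Fubini/maximal function computation. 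The second summand compares $\cH^{d}|_{\Gamma}$ to $\cH^{d}|_{V_{x}}$ in Kantorovich distance; writing $\Gamma$ locally as the graph of a Lipschitz function $A$ over $V_{\Gamma}$ and using that $A$ is pointwise close to its first-order Taylor polynomial, one bounds this by the $L^{1}$-Jones flatness coefficient $\beta_{\Gamma}^{1}(x,r)$. Classical Dorronsoro theory, applied to $A\in\Lip(\bR^{d})$, then yields $\int_{0}^{1}\beta_{\Gamma}^{1}(x,r)^{2}\,dr/r<\infty$ for $\cH^{d}$-a.e.\ $x\in\Gamma$, which is exactly the required estimate for this piece.

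The hardest step, I expect, is the control of the singular contribution $\mu^\perp(B(x,r))/r^{d}$, because a.e.\ vanishing of the upper $d$-density of $\mu^\perp$ along $\Gamma$ does not by itself yield polynomial decay and hence does not automatically give square summability in $dr/r$. To handle this, I would decompose $\mu^\perp$ onto an increasing sequence of compact sets $K_{n}$ on which the density of $\mu^\perp$ with respect to $\cH^{d}|_{\Gamma}$ is dominated by $1/n$, and on each $K_{n}$ use a stopping-time/Vitali-covering argument together with the $L^{2}(dr/r)$-boundedness of the dyadic maximal function to show that the $dr/r$-integral of $(\mu^\perp(B(x,r))/r^{d})^{2}$ is finite for $\cH^{d}|_{\Gamma}$-a.e.\ $x\in K_{n}$; since $\cH^{d}|_{\Gamma}$-a.e.\ point of $\Gamma$ lies in some $K_{n}$, this finishes the singular contribution. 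Combining the three pieces gives \eqref{e:xavi}.
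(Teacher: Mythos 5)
The paper itself does not prove this lemma; it is quoted from Tolsa's work. Comparing your plan to what is actually needed, the treatment of the absolutely continuous part (Rademacher tangent planes plus Dorronsoro for the $L^1$ flatness coefficient, plus a Lebesgue-point/oscillation estimate for the density $g$) is in the right spirit, but the reduction of the singular contribution to the density ratio $\mu^{\perp}(B(x,r))/r^{d}$ is a genuine gap, and the stopping-time/maximal-function repair you propose cannot close it.

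The problem is that the bound
\[
\alpha_{\mu^{\perp}}(x,r)\;\le\; r^{-d-1}F_{B(x,r)}(\mu^{\perp},0)\;\le\;\frac{\mu^{\perp}(B(x,r))}{r^{d}}
\]
(obtained by letting $c\to 0$) is far too lossy: a singular measure can have a density ratio that decays so slowly that the square function of $\mu^{\perp}(B(x,r))/r^{d}$ diverges a.e.\ on $\Gamma$, even though the square function of $\alpha_{\mu^{\perp}}$ itself is finite. Concretely, take $d=1$, $\Gamma=[0,1]\subset\R$, and set
\[
\nu\;=\;\sum_{k\ge 2}\frac{2^{-k}}{k(\log k)^{2}}\sum_{j=0}^{2^{k}-1}\delta_{j2^{-k}}.
\]
Each layer has total mass $\frac{1}{k(\log k)^{2}}$, which is summable, so $\nu$ is a finite measure, and it is concentrated on a countable set, hence singular with respect to $\cH^{1}$. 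For every $x\in[0,1]$ and $r\approx 2^{-m}$ one has
\[
\frac{\nu(B(x,r))}{r}\;\gec\;\sum_{k\ge m}\frac{1}{k(\log k)^{2}}\;\approx\;\frac{1}{\log m}\;=\;\frac{1}{\log\log_{2}(1/r)},
\]
and consequently $\int_{0}^{1}\bigl(\nu(B(x,r))/r\bigr)^{2}\,\frac{dr}{r}=\infty$ for every $x$. So no amount of slicing into sets $K_{n}$ where the density is eventually below $1/n$ can save the bound: smallness of the density ratio is not quadratic-in-$dr/r$ summability, and this example shows that for genuine singular parts the crude bound simply does not have the required decay. (Your remark that ``a.e.\ vanishing of the upper $d$-density does not by itself yield polynomial decay'' is exactly the right worry, and it is fatal here rather than a technicality.)

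What goes wrong conceptually is that you are throwing away the $\inf_{c>0}$ for the singular part by taking $c=0$. In the example above, $\nu$ at scale $r$ is extremely close to a multiple of Lebesgue measure on the line, so the optimal $c$ is roughly $1/\log m$, and a direct computation gives $\alpha_{\nu}(x,r)\lesssim\frac{1}{m(\log m)^{2}}$, which is square-summable, in accordance with the lemma. In other words, the singular part is not ``small'' in mass at each scale, but it can still be ``flat'', and the lemma depends on detecting this. Any correct proof must therefore preserve the flexibility of the constant $c$ for the whole of $\mu$ (or, equivalently, must compare $\mu$ at scale $r$ with $\mu$ at scale $2r$ via a martingale-type telescoping that keeps cancellation), rather than pass to the a.c.\textbackslash singular decomposition and treat the singular piece by its raw density. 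A secondary concern of the same nature arises in the oscillation term $r^{-d}\int_{B(x,r)\cap\Gamma}|g-g(x)|\,d\cH^{d}$: for $g$ only in $L^{1}_{\loc}$, a.e.\ square-summability in $dr/r$ of this $L^{1}$ oscillation is not immediate and also appears to require the martingale cancellation you are discarding by taking absolute values inside the integral.
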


As a corollary, if we let $\alpha_{E}:=\alpha_{\cH^{d}|_{E}}$, we get the following. 

\begin{corollary}
\label{c:xavi}
If $E$ is a Borel set and $F\subseteq E$ is $d$-rectifiable, then 
\[
\lim_{r\rightarrow 0}\alpha_{E}(x,r)=0 \;\; \mbox{ for $\cH^{d}$-a.e. $x\in F$}.
\]
\end{corollary}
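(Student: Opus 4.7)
The plan is to combine \Lemma{xavi} with an elementary scale-monotonicity property of $\alpha_\mu(x,r)$ to upgrade square integrability at $\cH^d$-a.e.\ $x\in F$ to a pointwise limit. Throughout, I will assume $\cH^d|_E$ is locally finite; otherwise $\alpha_E(x,r)=+\infty$ and there is nothing to prove.

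First, by $d$-rectifiability, decompose $F\subseteq N\cup\bigcup_{i=1}^{\infty}\Gamma_{i}$ with $\cH^{d}(N)=0$ and each $\Gamma_{i}$ a rotated Lipschitz graph. A countable union reduces the problem to proving the conclusion on $F\cap \Gamma$ for a single Lipschitz graph $\Gamma$. Applying \Lemma{xavi} to $\mu=\cH^{d}|_E$ and $\Gamma$ gives
\[
\int_{0}^{1}\alpha_{E}(x,r)^{2}\,\frac{dr}{r}<\infty\qquad \text{for $\cH^{d}$-a.e.\ }x\in\Gamma,
\]
and hence for $\cH^{d}$-a.e.\ $x\in F$.

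Next, I record the scale-monotonicity: for $r_{1}\leq r_{2}$,
\[
r_{1}^{d+1}\alpha_{E}(x,r_{1})\leq r_{2}^{d+1}\alpha_{E}(x,r_{2}).
\]
Indeed, any nonnegative $1$-Lipschitz function supported in $B(x,r_{1})$ is automatically supported in $B(x,r_{2})$, so for every $c>0$ and every $d$-plane $V$, $F_{B(x,r_{1})}(\cH^{d}|_E,c\cH^{d}|_V)\leq F_{B(x,r_{2})}(\cH^{d}|_E,c\cH^{d}|_V)$; taking the infimum over $c$ and then over $V$ yields the stated inequality.

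Finally, fix $x\in F$ at which the integral is finite and suppose for contradiction that $\alpha_{E}(x,r)\not\to 0$. Then there exist $\epsilon>0$ and $r_{k}\downarrow 0$ with $\alpha_{E}(x,r_{k})\geq\epsilon$; thin to a subsequence so that $r_{k+1}\leq r_{k}/2$, making the intervals $[r_{k},2r_{k}]$ pairwise disjoint. For $r\in[r_{k},2r_{k}]$, the monotonicity applied with $(r_{1},r_{2})=(r_{k},r)$ gives $\alpha_{E}(x,r)\geq (r_{k}/r)^{d+1}\epsilon\geq 2^{-(d+1)}\epsilon$, so
\[
\int_{0}^{1}\alpha_{E}(x,r)^{2}\,\frac{dr}{r}\geq\sum_{k}(2^{-(d+1)}\epsilon)^{2}\log 2=\infty,
\]
a contradiction. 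The only step beyond \Lemma{xavi} is the scale-monotonicity itself, which is a one-line consequence of the nested-ball structure in the definition of $F_{B(x,r)}$, so the argument presents no serious obstacle.
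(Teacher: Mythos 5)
Your proof is correct and follows the same route as the paper: cover $F$ by countably many Lipschitz graphs, apply Lemma~\ref{l:xavi} to $\mu=\cH^{d}|_{E}$, and use the monotonicity $\alpha_{\mu}(x,r)\leq (s/r)^{d+1}\alpha_{\mu}(x,s)$ for $r<s$ to upgrade the finiteness of $\int_{0}^{1}\alpha_{E}(x,r)^{2}\,dr/r$ to the pointwise limit. The paper leaves these steps as a one-line remark; your expansion (the nested-ball derivation of the monotonicity and the contradiction via disjoint dyadic annuli $[r_{k},2r_{k}]$) is precisely the intended argument.
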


This follows from Lemma \ref{l:xavi} since $F$ may be covered by countably many $d$-dimensional Lipschitz graphs and since $\alpha_{\mu}(x,r)\leq (s/r)^{d+1}\alpha_{\mu}(x,s)$ for $r<s$. 

\begin{remark}
The proof of the corollary is much simpler than envoking Lemma \ref{l:xavi}, and is quite standard, but we couldn't find a short reference for it. It can actually be proven more simply by the techniques in Chapters 14-16 of \cite{Mattila}, although for the sake of brevity we didn't want to recall too much background in order to do this.
\end{remark}

\section{Non-flatness or big dimension implies change in density}

\def\bbeta{b\beta}

For a Radon measure $\mu$, $s\geq 0$, and a ball $B$, we define
\[
\Theta_{\mu}^{s}(B)= \frac{\mu(B)}{r^{s}}.
\]

Throughout this section, we will work with a domain $\Omega\subseteq \R^{d+1}$ and we will let $\omega=\omega_{\Omega}$ for short. 

\begin{lemma}
\label{l:bigD-s>d}
Suppose $s>d\in \bN$, $c,C_1,M>0$, and $\Omega\subseteq \R^{d+1}$ has lower $s$-content regular with constant $C_1$. Let $B$ be centered on $\d\Omega$, $A>4$, $A_0>0$, and $x\in B\backslash 2A^{-1} B$ a $c$-corkscrew point in $B$ (meaning $x\in B\cap \Omega$ and $B(x,2cr_{B})\subseteq \Omega$) so that 
\[
\omega^{x}(A^{-1}B)\geq  A_{0}.
\]
Then there is $\delta=\delta(d,c,M,C_1,A_0,A)>0$ and a ball $B'\subseteq 5A^{-1}B$ with $r_{B'}\geq \delta r_{B}$ so that 
\[
\Theta_{\omega^{x}}^{s}(B')>Mr_{B}^{-s}
\]
\end{lemma}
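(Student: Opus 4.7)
The plan is to use a Green's function / touching-point argument in which the strict inequality $s>d$ enters through a negative exponent $d-1-s$ that makes the $s$-density of harmonic measure blow up at small scales.

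First, via integration by parts, I would extract an interior point of large Green function value. Pick a smooth cutoff $\phi$ with $\phi\equiv 1$ on $A^{-1}B$, $\supp\phi\subseteq \tfrac54 A^{-1}B$, and $\|\triangle\phi\|_\infty\lesssim A^2/r_B^2$. Since $x\notin 2A^{-1}B$ one has $\phi(x)=0$, so Lemma~\ref{e:ibp} combined with $\omega^{x}(A^{-1}B)\geq A_0$ gives
\[
A_0 \le \int\phi\,d\omega^x = \int_\Omega \triangle\phi(y)\,G_\Omega(x,y)\,dy \lesssim \frac{A^2}{r_B^{2}}\int_{\tfrac54 A^{-1}B\cap\Omega}G_\Omega(x,y)\,dy,
\]
and averaging against the volume $\lesssim (A^{-1}r_B)^{d+1}$ yields $y_0\in \tfrac54 A^{-1}B\cap\Omega$ with $G_\Omega(x,y_0)\gtrsim A_0 A^{d-1}r_B^{-(d-1)}$. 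Set $r_0:=\dist(y_0,\partial\Omega)$ and choose $\xi_0\in\partial\Omega$ realizing this distance; since $\omega^x(A^{-1}B)>0$ forces $A^{-1}B\cap\partial\Omega\neq\emptyset$, one gets $r_0\lesssim A^{-1}r_B$, $B(y_0,r_0)\subset\Omega$, and $y_0$ is a definite corkscrew of $B(\xi_0,2r_0)$. Applying Lemma~\ref{e:green} to a ball $B':=B(\xi_0,Cr_0)\subseteq 5A^{-1}B$---possible for a suitable constant $C$ once $y_0$ is placed well inside $A^{-1}B$, which may require a minor refinement of $\phi$---gives
\[
\omega^x(B')\gtrsim G_\Omega(x,y_0)\,r_0^{d-1}\gtrsim A_0 A^{d-1}(r_0/r_B)^{d-1}.
\]

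The density then satisfies $\Theta^s_{\omega^x}(B')\gtrsim A_0 A^{d-1} r_0^{d-1-s}/r_B^{d-1}$, and because $s>d$ forces $d-1-s<0$, this expression blows up as $r_0\to 0$. Concretely, $\Theta^s_{\omega^x}(B')>Mr_B^{-s}$ holds whenever $r_0\lesssim r_B(A_0 A^{d-1}/M)^{1/(s-d+1)}$. If the $r_0$ from the extraction step already satisfies this smallness condition, I set $\delta:=Cr_0/r_B$ and finish. Otherwise $B'$ is a macroscopic sub-ball still carrying a definite fraction of $\omega^x$-mass, and I iterate the whole construction inside $B'$ after a change of pole: take $y_1$ to be a corkscrew of $A^{-1}B'$ (which exists because $B(y_0,r_0)\subset B'\cap\Omega$ supplies an inscribed ball of comparable size), invoke Lemma~\ref{l:bourgain} to produce $\omega^{y_1}(A^{-1}B')\gtrsim c_1$, and use the comparison principle for CDC domains---available here since lower $s$-content regularity with $s>d-1$ implies the CDC---to transfer the resulting estimates back to $\omega^x$. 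Each iteration shrinks the scale by a factor $\sim A^{-1}$ while boosting the density by a factor $\gtrsim A^{s-d+1}>1$, so finitely many iterations (a number depending logarithmically on $M$) drive $r_0$ below the threshold and furnish $\delta=\delta(d,c,M,C_1,A_0,A)$.

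The main obstacle is the iteration: transferring the $\omega^x$-mass hypothesis across a combined scale reduction and pole change requires quantitatively uniform control through Bourgain's lemma and the comparison / change-of-pole principle, and one must also verify at each step that the chosen ball $B'$ actually lies inside $5A^{-1}B$. Once those pieces are in place, the rest is bookkeeping around Lemmas~\ref{e:ibp} and~\ref{e:green}, with $s>d$ used only to ensure $d-1-s<0$.
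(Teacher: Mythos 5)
Your first step (cut-off plus Lemma \ref{e:ibp}) matches the paper's opening, but the two arguments diverge at exactly the point where your proof develops a genuine gap. You extract a point $y_0$ with $G(x,y_0)$ large by averaging, but you have no lower bound whatsoever on $r_0=\dist(y_0,\d\Omega)$: the average could be achieved by a thin spike of the Green function near the boundary. Feeding $y_0$ into Lemma \ref{e:green} then gives you one ball $B'$ at the single scale $r_0$, and the density $\Theta^s_{\omega^x}(B')$ is large only if $r_0$ is already small. You try to repair this by iterating, but the iteration you describe is circular as written --- each step ``invokes the whole construction inside $B'$'' which is the very lemma being proved --- and the claimed geometric boost $A^{s-d+1}$ per step rests on a change-of-pole comparison (passing from $\omega^{y_1}$ back to $\omega^x$ across the boundary of $B'$) that you have not established and that is not trivial in a general CDC domain; you yourself flag this as ``the main obstacle.''

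The paper closes exactly this gap before it can open. After showing $\sup_{2A^{-1}\bB}G(x,\cdot)\gtrsim A_0$, it does not settle for a point where $G$ is merely large at some unknown depth; it looks at the level set $E_{\lambda}=\{G(x,\cdot)>\lambda A_0\}$ and combines the H\"older estimate \eqref{e:holder} with \eqref{e:G<w} to show that $\delta_{\Omega}(z)<\ve$ forces $G(x,z)\lesssim c^{1-d}\ve^{\alpha}<\lambda A_0$. Hence every point of $E_{\lambda}$ is at distance $\geq\ve(A_0,A,c,d,C_1)$ from $\d\Omega$. This gives a point $y\in 2A^{-1}\bB$ with $\delta_{\Omega}(y)\sim 1$, connected to $x$ by a bounded Harnack chain, and then the inscribed tangent ball $B(y,\delta_\Omega(y))$ of \emph{unit} size touching $\d\Omega$ at $\xi$ supplies, via the explicit Green function of a ball, the estimate $\omega^{x}(4B_t)\gtrsim t^{d}$ for \emph{all} small $t$, so $\Theta^{s}_{\omega^x}(4B_t)\gtrsim t^{d-s}\to\infty$ and one just picks $t$ small. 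That uniform lower bound over a full range of scales --- rather than at one uncontrolled scale $r_0$ --- is what removes any need to iterate. To repair your proof you would need to import this H\"older step (or otherwise produce a corkscrew point near $A^{-1}B$ of definite depth with a Harnack chain to $x$); without it, the case of macroscopic $r_0$ is unresolved.
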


\begin{proof}
Without loss of generality, $B=\bB$. We first use a method of Aikawa and Hirata \cite{AH08} to show there is a Harnack chain from $x$ to a point in $2A^{-1}\bB$. Let $\phi$ be a smooth function supported on $2A^{-1}\bB$ and equal to $1$ on $A^{-1}\bB$. Then 
\[
A_0\leq \int \phi d\omega^{x}
\stackrel{\eqref{e:ibp}}{=}\int G(x,y)\triangle \phi(y)dy
\lec \sup_{y\in 2A^{-1}\bB}G(x,y) .
\]
Thus, there is a universal constant $\lambda>0$ so that
\[
y\in E_{\lambda}=\{z: G(x,z)>\lambda A_0\}.
\]
Note that as $G(x,z)\lec |x-z|^{1-d}$, $E_{\lambda}$ is a bounded set with diameter depending on $\lambda$. Moreover, $E_{\lambda}$ is open and also contains $x$. By the maximum principle, it must also be connected, so there is a curve $\gamma\subseteq E_{\lambda}$ joining $x$ to a point in $2A^{-1}B$. Note that if $y\in \gamma$ $\delta_{\Omega}(y)<\ve$, and $\xi\in \d\Omega$ is closest to $y$, then $x\not\in B(\xi,c/2)$, and so 
\[
G(x,y)\stackrel{\eqref{e:holder}}{\lec} ||G(x,\cdot)||_{L^{\infty}(B(\xi,c/4))}\ve^{\alpha}
\stackrel{\eqref{e:G<w}}{\lec} \omega^{x}(B(\xi,c))c^{1-d}\ve^{\alpha}\]
and so $y\not\in E_{\lambda}$ for $\ve$ small enough (depending on $A$ and $d$). Thus, $\dist(y,\d\Omega)\geq \ve>0$ for all $y\in \gamma$. Thus, we can find a Harnack chain from $x$ to a point $y\in \gamma\cap 2A^{-1}\bB$ of uniformly bounded length (depending on $\ve,d,$ and $A$).

Again, let $\xi$ be the closest point in $\d\Omega$ to $y$. Let $v=\frac{y-\xi}{|y-\xi|}$ and for $t>0$ let $B_{t}=B(\xi,t)$ (see Figure \ref{f:figure}). 

\begin{figure}
\begin{center}
\includegraphics[width=200pt]{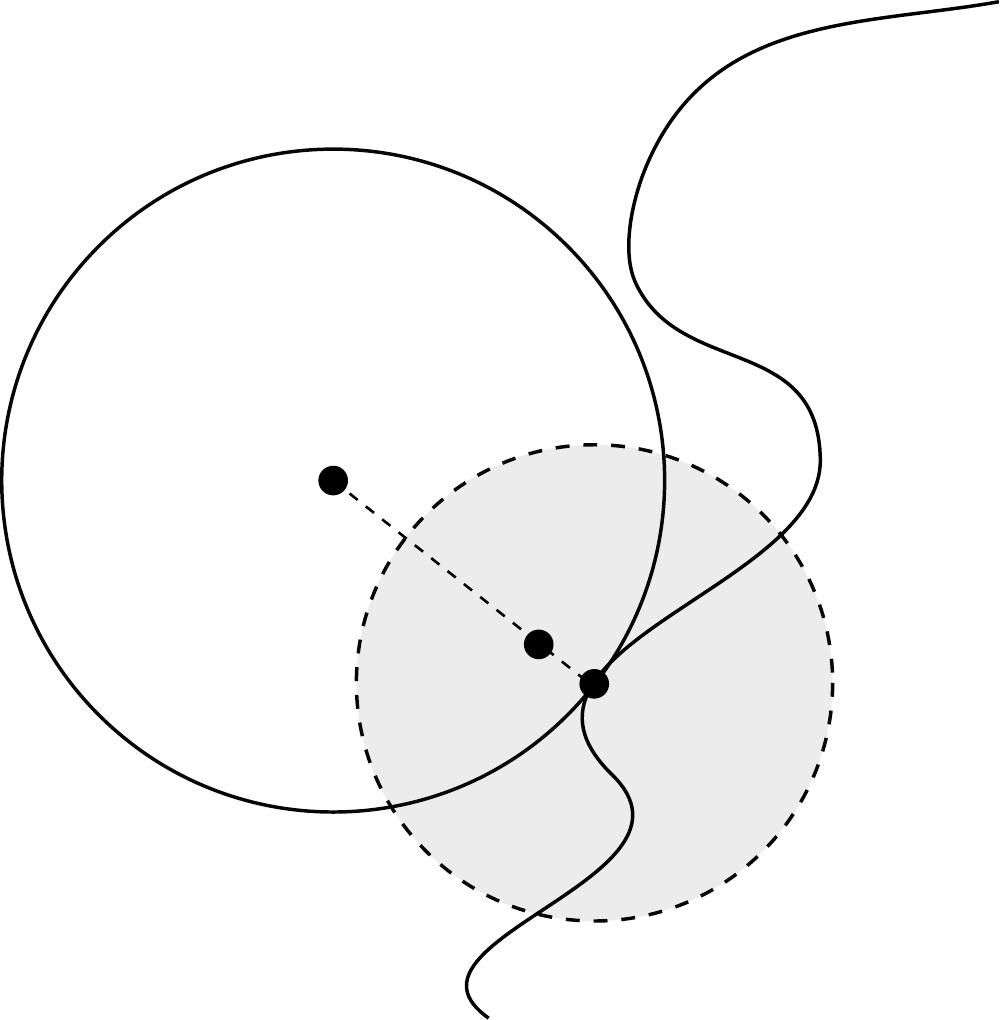}
\begin{picture}(0,0)(200,0)
\put(125,150){$\d\Omega$}
\put(50,150){$B(y,\delta_{\Omega}(y))$}
\put(50,115){$y$}
\put(120,60){$\xi$}
\put(70,70){$\xi+tv$}
\put(100,95){$4B_{t}$}
\end{picture}
\end{center}
\caption{The balls $4B_{t}$ and $B(y,\delta_{\Omega}(y))$. Observe that $B(y,\delta_{\Omega}(y)$ is tangent to $\d\Omega$. }
\label{f:figure}
\end{figure}

Recall that if $d>1$, for any $r>0$ and $z\in B(y,r)\backslash \{y\}$,
\[
G_{B(y,r)}(y,z) = c(|y-z|^{1-d}-r^{1-d})
\]
where $c>0$ is some constant depending on $d$. (If $d=1$, Green's function is instead a multiple of $\log\frac{r}{|y-z|}$, and the estimates below are similar in this case). Thus, for $t>0$ small, and if $d>1$,
\begin{align*}
\omega^{x}(4B_{t})
& \stackrel{\eqref{e:G<w}}{\gec} G(x,\xi+tv)t^{d-1} 
\stackrel{\eqref{e:harnack}}{\sim}_{\ve} G(y,\xi+tv)t^{d-1}\\
& \geq G_{B(y,\delta_{\Omega}(y))}(y,\xi+tv)t^{d-1}\\
& = c \ps{|y-(\xi+tv)|^{1-d} - \delta_{\Omega}(y)^{1-d}}t^{d-1}\\
& = c \ps{(\delta_{\Omega}(y)-t)^{1-d} - \delta_{\Omega}(y)^{1-d}}t^{d-1}\\
& = c \ps{(1-t/\delta_{\Omega}(y))^{1-d} -1}\frac{t^{d-1}}{\delta_{\Omega}(y)^{d-1}}
 \gec \frac{t^{d}}{\delta_{\Omega}(y)^{d}}.
\end{align*}

Note that as there is a Harnack chain of length depending on $\ve$ between $x$ and $y$, and because $x$ is a $c$-corkscrew point in $B$, we have that 
\[
\delta_{\Omega}(y)\sim_{\ve}\delta_{\Omega}(x)\sim_{c} r_{B}=1.
\]

In particular, for $s>d$, the above estimates imply $\omega^{x}(4B_{t})\gec t^{d}$. In particular, $\Theta_{\omega^{x}}^{s}(4B_{t })
\gec t^{d-s}$, hence for $t$ small (depending on $M$), $\Theta_{\omega^{x}}^{s}(4B_{t})> M$. Since $y\in 2A^{-1}B$ and the center of $B$ is in $\d\Omega$, $|\xi-y|\leq 2A^{-1}$, so for $t$ small enough, we can also guarantee that $4B_{t }\subseteq 5A^{-1}\bB$, and so $4B_{t}$ is our desired ball.

\end{proof}
\def\bbeta{b\beta}

\begin{lemma}
\label{l:bigD-s=d}
Given $d\in \bN$, $M,c,C_{1}>0$, and $\beta>0$, there is $s_{0}<d$ so that the following holds. Let $s_{0}<s\leq d+1$ and $\Omega\subseteq \R^{d+1}$ be a connected domain so that \eqref{e:adr} and \eqref{e:bigbeta} hold. Let $A>4$, $A_{0}>0$ and suppose $x\in B\backslash 2A^{-1}B$ is a $c$-corkscrew point in $B$ so that 
\[
\omega^{x}(A^{-1}B^{\circ})\geq  A_{0}.
\]
Then there is $\delta=\delta(d,c,C_1,M,\beta,A_0,A)>0$ and a ball $B'\subseteq 5A^{-1} B$ so that $r_{B'}\geq \delta r_{B}$ and
\[
\Theta_{\omega^{x}}^{s}(B')>Mr_{B}^{-s}.
\]
\end{lemma}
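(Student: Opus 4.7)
The plan is a contradiction argument by compactness: rescale a hypothetical counterexample to produce a limit domain $\Omega_\infty$ on whose boundary harmonic measure $\omega_\infty$ has uniformly bounded upper $d$-density, apply Theorem~\ref{t:AHM3TV}(a) to extract a $d$-rectifiable piece of $\d\Omega_\infty$ with positive $\omega_\infty$-measure, and use \Corollary{xavi} to find a point where the bilateral $\beta$-number tends to $0$---in contradiction with the hypothesis \eqref{e:bigbeta}, which should survive in the limit.

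Suppose the lemma fails. Then for each $j\in \bN$ there are $s_{j}\in (d-1/j,d+1]$, a domain $\Omega_{j}$ satisfying \eqref{e:adr} and \eqref{e:bigbeta} with the given constants, a ball $B_{j}$ centered on $\d\Omega_{j}$, and a $c$-corkscrew point $x_{j}\in B_{j}\setminus 2A^{-1}B_{j}$ with $\omega_{\Omega_{j}}^{x_{j}}((A^{-1}B_{j})^{\circ})\geq A_{0}$, such that after normalizing $r_{B_{j}}=1$ one has $\omega_{\Omega_{j}}^{x_{j}}(B')\leq Mr_{B'}^{s_{j}}$ for every ball $B'\subseteq 5A^{-1}\bB$ centered on $\d\Omega_j$ with $r_{B'}\geq 1/j$. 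If a subsequence had $s_{j}\geq d+\ve_{0}>0$, the proof of \Lemma{bigD-s>d} would give a bad ball of radius bounded below uniformly by $\delta(d,c,M,C_{1},A_{0},A,\ve_{0})$, a contradiction; so pass to a subsequence with $s_{j}\to d$. After translating so that $0\in \d\Omega_{j}$, fix any $s^{*}\in (d-1,d)$; the content comparison $\cH^{s^{*}}_{\infty}(E)\geq (\diam E)^{s^{*}-s_{j}}\cH^{s_{j}}_{\infty}(E)$ (valid whenever $s^{*}<s_{j}$) makes the $\Omega_{j}^{c}$ uniformly lower $s^{*}$-content regular on unit scales for large $j$, so \Lemma{limlem} provides a subsequential limit connected open set $\Omega_{\infty}$, a pole $x_{\infty}\in \overline{\bB}\setminus 2A^{-1}\bB$, and $\omega_{\Omega_{j}}^{x_{j}}\rightharpoonup \omega_{\infty}:=\omega_{\Omega_{\infty}}^{x_{\infty}}$, with Hausdorff convergence $\d\Omega_{j}\to \d\Omega_{\infty}$ on compacts.

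Three inheritance properties then close the argument. First, because $s_{j}\to d$ and $\d\Omega_{j}$ is $C_{1}$-Ahlfors $s_j$-regular, combining the semicontinuity of $\cH^{d}_{\infty}$ under Hausdorff convergence with $r^{s_j}\to r^d$ yields that $\d\Omega_{\infty}$ is Ahlfors $d$-regular with a constant depending only on $C_{1}$; in particular $\cH^{d}|_{\d\Omega_{\infty}}$ is locally finite. Second, for a closed ball $B'\subseteq 5A^{-1}\bB$ centered on $\d\Omega_{\infty}$, choose $\xi_{j}\in \d\Omega_{j}$ with $\xi_{j}\to x_{B'}$ and apply the failure hypothesis to $B(\xi_{j},r_{B'}+\ve)$; weak convergence and the standard argument through open neighborhoods yields $\omega_{\infty}(B')\leq M(r_{B'}+\ve)^{d}$ and hence $\omega_{\infty}(B')\leq Mr_{B'}^{d}$ on letting $\ve\to 0$, while $\omega_{\infty}((A^{-1}\bB)^{\circ})\geq A_{0}>0$ is preserved similarly. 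Third, \eqref{e:bigbeta} persists with the same constant $\beta$: if $\bbeta_{\omega_{\infty}}(x,r)<\beta$ for some $x\in\d\Omega_\infty$ and admissible $r$, the approximating plane would, by Hausdorff closeness of $\d\Omega_{j}$ to $\d\Omega_{\infty}$, make $\bbeta_{\omega_{\Omega_j}}$ small at a nearby point for large $j$, contradicting the uniform bound.

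From the upper density bound and Mattila's density theorem (see \cite[Theorem 6.9]{Mattila}), $\omega_{\infty}\ll \cH^{d}|_{\d\Omega_{\infty}}$ on $5A^{-1}\bB\cap\d\Omega_{\infty}$; together with the local finiteness of $\cH^{d}|_{\d\Omega_{\infty}}$, standard measure theory (e.g.\ a level set of the Radon--Nikodym derivative) supplies a Borel set $E\subset 5A^{-1}\bB\cap \d\Omega_{\infty}$ with $\cH^{d}(E)<\infty$ and $\omega_{\infty}(E)>0$. Theorem~\ref{t:AHM3TV}(a) then presents $\omega_{\infty}|_{E}$ as carried by a $d$-rectifiable set $F\subseteq \d\Omega_{\infty}$. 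By \Corollary{xavi}, $\alpha_{\d\Omega_{\infty}}(y,r)\to 0$ as $r\to 0$ for $\cH^{d}$-a.e.\ (hence $\omega_{\infty}$-a.e., by absolute continuity) $y\in F$; since $\d\Omega_{\infty}$ is Ahlfors $d$-regular, vanishing $\alpha$-numbers force the bilateral $\beta$-number to vanish at the same point, so $\bbeta_{\omega_{\infty}}(y,r)\to 0$ for some $y\in \d\Omega_\infty$, contradicting the third inheritance claim. The main technical obstacle is the first inheritance property: passing Ahlfors $s_{j}$-regularity with $s_{j}\to d$ to Ahlfors $d$-regularity of $\d\Omega_{\infty}$ with uniform constants, since the upper bound in Ahlfors regularity is not directly upper semicontinuous under Hausdorff convergence and requires a quantitative packing argument leveraging $r^{s_j}\to r^{d}$.
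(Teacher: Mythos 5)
Your overall strategy matches the paper's: a contradiction via a blow-up limit, apply \Lemma{limlem}, use \Lemma{bigD-s>d} to pin the limit dimension to $d$, invoke Theorem~\ref{t:AHM3TV} and \Corollary{xavi} to produce vanishing $\alpha$-numbers, and contradict inherited non-flatness. But there is a genuine gap in your ``third inheritance property,'' and it propagates through the rest of the argument.

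You assert that \Lemma{limlem} gives Hausdorff convergence $\d\Omega_j\to\d\Omega_\infty$, and you use this to transfer \eqref{e:bigbeta} to $\d\Omega_\infty$ and to apply \Corollary{xavi} with ambient set $\d\Omega_\infty$. But \Lemma{limlem} provides only convergence of Green functions, weak convergence of harmonic measures, and persistence of lower content regularity of the complement --- it says nothing about Hausdorff convergence of boundaries. In general, if $\Sigma$ denotes the (Blaschke-compactness) Hausdorff limit of $\d\Omega_j$, one only gets the one-sided inclusion $\d\Omega_\infty\subseteq\Sigma$, and it can be strict: pieces of $\d\Omega_j$ can collapse into the interior or exterior of $\Omega_\infty$. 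When $\d\Omega_\infty\subsetneq\Sigma$, a plane $V$ can be uniformly close to $\d\Omega_\infty\cap B(x,r)$ while remaining far from $\Sigma\cap B(x,r)$ (and hence from $\d\Omega_j\cap B(x,r)$ for large $j$), so smallness of $\bbeta$ computed with $\d\Omega_\infty$ does \emph{not} force smallness of $\bbeta$ computed with $\d\Omega_j$. Thus your claimed transfer of \eqref{e:bigbeta} to $\d\Omega_\infty$ does not follow, and your claimed Ahlfors $d$-regularity of $\d\Omega_\infty$ (lower bound included) is likewise unjustified.

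The paper sidesteps this exactly by extracting $\Sigma$ as a separate Hausdorff limit of $\d\Omega_j\cap 10\bB$, verifying (via weak convergence and Bourgain's lemma) that $\d\Omega_0\cap 5A^{-1}\bB\subseteq\Sigma$, establishing $\sigma=\cH^d|_\Sigma$ is Ahlfors $d$-regular, proving $\bbeta_\Sigma\geq\beta$ directly from Hausdorff convergence of $\d\Omega_j$ to $\Sigma$, and then applying \Corollary{xavi} with ambient set $\Sigma$ (not $\d\Omega_0$) to the rectifiable set $E\subseteq\d\Omega_0\subseteq\Sigma$. The $\alpha$ vs.\ $\bbeta$ contradiction is then carried out for $\Sigma$, where both sides are legitimately controlled. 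You should restructure your argument along the same lines: work with $\Sigma$, not $\d\Omega_\infty$, for the geometric part of the contradiction. The rest of your sketch (the reduction to $s_j\to d$, the Frostman bound on $\omega_\infty$, absolute continuity, extraction of $E$ with $\cH^d(E)<\infty$ and $\omega_\infty(E)>0$, Theorem~\ref{t:AHM3TV}(a), and the use of \Corollary{xavi}) is in line with the paper once the ambient set is corrected.
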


\begin{proof}
Again, we can assume $B=\bB$. Suppose instead that for all $j\in \bN$ we could find a domain $\Omega_{j}$ with $C_{1}$-Ahlfors $s_j$-regular boundary with $d-\frac{1}{j}<s_j\leq d+1$ containing $0$ and $x_{j}$ a $c$-corkscrew point in $\bB\backslash 2A^{-1}\bB$ so that for all $j$,
\[
\omega_{\Omega_{j}}^{x_{j}}(A^{-1}\bB^\circ)\geq A_{0}
\]
yet for all $B\subseteq \frac{1}{2}\bB$ with $r_{B}\geq \frac{1}{j}$,
\[
\omega_{\Omega_{j}}^{x_{j}}(B)\leq Mr_{B}^{s_j}. 
\]
Since the $\d\Omega_{j}$ are Ahlfors $s_j$-regular with constant $C_{1}$ and $s_j\rightarrow s$, for $j$ large they all uniformly have large $(d-1/2)$-content regular complements. By \ref{l:limlem}, we may pass to a subsequence so that $s_{j}\rightarrow s\in [d,d+1]$, $x_{j}\rightarrow x_0\in \bB\backslash 2A^{-1}\bB$ a $c$-corkscrew point in $\bB$ for a domain $\Omega_0=\Omega_{\infty}^{x_{0}}$ so that $\omega_{\Omega_{j}}^{x_{j}}\warrow \omega_{\Omega_0}^{x_{0}}$ and also has lower $(d-1/2)$-content regular complement. In particular, one can show that 
\begin{equation}
\label{e:w0dfrostmann}
\omega_{\Omega_{0}}^{x_{0}}(B)\leq Mr_{B}^{d}
\end{equation}
for all balls $B$ centered on $\d\Omega_{0}$ contained in $5A^{-1}\bB$, and 
\begin{equation}
\label{e:omega0dbig}
\omega_{\Omega_{0}}^{x_{0}}(A^{-1}\bB^{\circ})\geq A_{0}.
\end{equation}

Hence, by the previous lemma, \eqref{e:w0dfrostmann} is impossible if $s>d$, thus we must have $s=d$. In particular, $\omega_{\Omega_{0}}^{x_{0}}\ll \cH^{d}$ in $5A^{-1}\bB$. 

Since the $\d\Omega_{j}$ are Ahlfors $s_j$-regular with constant $C_{1}$ and $s_j\rightarrow s$, we can also pass to a subsequence so that $\d\Omega_{j}\cap 10\bB$ converges in the Hausdorff metric to a set $\Sigma$ so that if $\sigma=\cH^{d}|_{\Sigma}$, then $\sigma(B)\sim r^{s}$ for all $B\subseteq 10\bB$ centered on $\Sigma$. 

We will now show that for this new set we have
\begin{equation}
\label{e:sigma-no-flat}
\bbeta_{\Sigma}(x,r)\geq \beta>0 \mbox{ for all }x\in \Sigma\cap \bB, \;\; 0<r<1.
\end{equation}

We will prove by contrapositive. Suppose instead that there was $x\in \Sigma\cap \bB$ and $0<r<1$ so that $\bbeta_{\Sigma}(x,r)< \beta$, so there is a $d$-plane $P$ so that 
\[
\sup_{y\in B(x,r)\cap P} \dist(y,\Sigma) + \sup_{y\in B(x,r)\cap \Sigma}\dist(y,P)<\beta.
\]
If $z_j\in B(x,r)\cap \d\Omega_j$ is farthest from $P$ and $y_j\in P\cap B(x,r)$ is farthest from $\d\Omega_j$, then we can pass to a subsequence so that $z_j\rightarrow z\in B(x,r)\cap \Sigma$ and $y_j\rightarrow  y \in P\cap B(x,r)$, and then
\begin{align*}
\beta & \leq \limsup_{j\rightarrow\infty}
 \ps{\sup_{p\in \d\Omega_j\cap B(x,r)}\dist(p,P)+\sup_{q\in P\cap B(x,r)}\dist(q,\d\Omega_j)}\\
& =\limsup_{j\rightarrow\infty} \ps{ \dist(z_j,P) + \dist(y_j,\d\Omega_j)}\\
& =\dist(z,P)+\dist(y,\Sigma)<\beta,
\end{align*}
which is a contradiction, and this proves \eqref{e:sigma-no-flat}.

Thus, $\d\Omega_{0}\cap 5A^{-1}\bB\subseteq \Sigma$, so $\cH^{d}(\d\Omega_{0}\cap 5A^{-1}\bB)<\infty$. By \eqref{e:w0dfrostmann} and \eqref{e:omega0dbig}, and because  $\omega_{\Omega_{0}}^{x_{0}}\ll \cH^{d}$ on $\d\Omega_{0}\cap 5A^{-1} \bB$, there is $E\subseteq \d\Omega_{0}\cap 5A^{-1}\bB$ with $0<\cH^{d}(E)<\infty$ so that $\cH^{d}\ll \omega_{\Omega_{0}}^{x_{0}}\ll \cH^{d}$ on $E$. To see this, just observe that if $f=d\omega_{\Omega}^{x_{0}}|_{5A^{-1}\bB\cap \d\Omega_{0}}/d\cH^{d}|_{5A^{-1}\bB\cap \d\Omega_{0}}$ is the Radon-Nikodym derivative, then $E=\{x\in 5A^{-1}\bB\cap \d\Omega_{0}: f(x)>0\}$ is our desired set (see \cite[Theorem 2.12]{Mattila}). Now Theorem \ref{t:AHM3TV} implies that $E$ is a $d$-rectifiable set of positive $\cH^{d}$-measure. Since $E\subseteq \Sigma$, Corollary \ref{c:xavi} implies that 
\[
\lim_{r\rightarrow 0} \alpha_{\Sigma}(x,r)=0.
\]
for some $x\in E$. In particular, if $\ve>0$, there is $r>0$ small enough and a plane $V$ so that 
\begin{equation}
\label{e:alpha4rV}
\alpha_{\Sigma}(x,4r,V)<\epsilon.
\end{equation}
Without loss of generality, we can assume $V\cap B(x,4r)\neq\emptyset$.  Indeed, suppose instead that $V\cap B(x,4r)\emptyset$. Let $\phi(z)=(4r-|x-z|)_{+}$, then $\phi(z)=0$ on $V$ and $\phi\geq 2r$ on $B(x,2r)$. This and the fact that  $\Sigma$ is Ahlfors regular imply
\begin{multline}
\ve>\alpha_{\Sigma}(x,4r,V)\geq (4r)^{-d-1} \int \phi(z)d\sigma(z)
\\ \geq (4r)^{-d-1} \sigma(B(x,2r)) 2r 
\gec_{C_{1}} 1.
\end{multline}
which is impossible for $\ve$ small enough depending on $C_{1}$, and so $V\cap B(x,r)\neq\emptyset$ for $\ve>0$ small.

But by \eqref{e:sigma-no-flat}, for all $r>0$ so that $B(x,r)\subseteq \bB$ (since $x\in 5A^{-1}\bB$), there is either $y\in B(x,r)\cap \Sigma$ so that $\dist(y,V)\geq \delta r$, or there is $y\in V\cap B(x,r)$ so that $\dist(y,\Sigma)\geq \delta r$. In the former case, if we let $0\leq \phi\leq 1 $ be a $\frac{1}{2r}$-Lipschitz function equal to $1$ on $B(x,2r)$ and zero outside $B(x,4r)$, then $\psi(z)=\phi(z)\dist(z,V)$ is a $6$-Lipschitz function (recall $\dist(z,V)\leq 8r$ for all $z\in B(x,4r)$ since $V\cap B(x,4r)\neq\emptyset$) and $\psi(z)\geq  \delta r/2$ on $B(y,\delta r/2)$. Hence,
\[
C_{1}^{-1} \delta^{d+1}2^{-d}
\leq r^{-d} \delta \sigma(B(y,r \delta/2))
\leq 2 r^{-d-1} \int \psi d\sigma 
\stackrel{\eqref{e:alpha4rV}}{<}\lec \ve,
\]
which is a contradiction for $\ve$ small enough. The case that there is $y\in V\cap B(x,r)$ so that $\dist(y,\Sigma)\geq \delta r$ has a similar proof and we omit it.

\end{proof}

\begin{lemma}
\label{l:dip}
Given $d\in \bN$, $C_{1}>0$, and $\beta>0$, there is $s_0<d$ so that the following holds. Let $s_{0}<s\leq d+1$ and $\Omega\subseteq \R^{d+1}$ be a connected domain so that \eqref{e:adr} and \eqref{e:bigbeta} hold. Let $B_0$ be a ball centered on $\d\Omega$ and $p\in \Omega\backslash aB_0$ (where $a=2b^{-1}>b^{-1}>1$ and $b$ is as in Lemma \ref{l:bourgain}). Set $\omega=\omega_{\Omega}^{p}$. Then for all $M_1>0$ there is $\delta>0$ and a ball $B\subseteq \frac{1}{2} B_0$ so that $r_{B}\geq \delta r_{B_0}$ and 
\[
\Theta_{\omega}^{s}(B)\not\in [M_1^{-1}\Theta_{\omega}^{s}(a B_0),M_1\Theta_{\omega}^{s}(a B_0)].\]
\end{lemma}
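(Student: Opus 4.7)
The plan is to argue by contradiction: assume that every ball $B\subseteq \frac{1}{2}B_{0}$ centered on $\d\Omega$ with $r_{B}\geq \delta r_{B_{0}}$ satisfies $\Theta_{\omega}^{s}(B)\in [M_{1}^{-1}\Theta_{0},M_{1}\Theta_{0}]$, where $\Theta_{0}=\Theta_{\omega}^{s}(aB_{0})$. The idea is to use Lemma \ref{l:bigD-s=d} to build a sub-ball $B'\subseteq \frac{1}{2}B_{0}$ on which the density of harmonic measure from an interior pole blows up, then transfer this blow-up back to $\omega=\omega^{p}$ by a change-of-pole, contradicting the assumed upper Frostman bound.

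After normalizing $r_{B_{0}}=1$, I would first produce an interior $c$-corkscrew point $y_{0}$ for $B_{0}$ that lives in $B_{0}\setminus \frac{1}{2}B_{0}$, using the lower $s$-content regular complement condition (which, since $s>d-1$, is a standard consequence of the Ahlfors regularity of $\d\Omega$). With $A=10$ fixed, I would verify the hypothesis of Lemma \ref{l:bigD-s=d}: namely $\omega^{y_{0}}(A^{-1}B_{0}^{\circ})\geq A_{0}$ for a universal $A_{0}$. This follows by choosing a deep corkscrew $z_{0}$ inside $A^{-1}B_{0}$, invoking Lemma \ref{l:bourgain} to get $\omega^{z_{0}}(A^{-1}B_{0})\gtrsim 1$, and then transferring to $y_{0}$ via a Harnack chain of length controlled by $d,c,A$. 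For a parameter $M$ to be determined, Lemma \ref{l:bigD-s=d} then delivers a ball $B'\subseteq 5A^{-1}B_{0}=\frac{1}{2}B_{0}$ with $r_{B'}\geq \delta_{*}=\delta_{*}(M)$ and $\Theta_{\omega^{y_{0}}}^{s}(B')>M$.

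Next I would run the change-of-pole. Since $p\in \Omega\setminus aB_{0}$ with $a=2b^{-1}$ and $y_{0}$ is an interior corkscrew for $B_{0}$ with $\dist(y_{0},\d\Omega)\gtrsim 1$, combining Lemma \ref{e:green}, Lemma \ref{l:bourgain} and the boundary Hölder estimate of Lemma \ref{l:holder} gives the comparison
\[
\omega^{p}(E)\sim \omega^{p}(aB_{0})\,\omega^{y_{0}}(E)\quad \text{for all Borel }E\subseteq \d\Omega\cap \tfrac{1}{2}B_{0},
\]
with implicit constants depending only on $d,c,C_{1},a$. Applied to $B'$ this yields $\omega^{p}(B')\gtrsim \omega^{p}(aB_{0})\,M\,r_{B'}^{s}$, while the contradiction hypothesis gives $\omega^{p}(B')\leq M_{1}\Theta_{0}r_{B'}^{s}=M_{1}a^{-s}\omega^{p}(aB_{0})r_{B'}^{s}$. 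Comparing these two bounds forces $M\lesssim M_{1}a^{-s}$ with a universal implied constant, so choosing $M$ larger than this threshold (which fixes $\delta:=\delta_{*}(M)$ and thereby defines the $\delta$ of Lemma \ref{l:dip}) gives a contradiction.

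The main obstacle, and the part most worth verifying carefully, is the change-of-pole identity used in the final step. It is standard for CDC domains, but one must check that the constants depend only on the admissible data ($d$, $C_{1}$, the corkscrew constant, and $a$), so that the final choice of $M$ is uniform in $\Omega$. The rest is essentially a bookkeeping exercise: matching the $\delta$ produced by Lemma \ref{l:bigD-s=d} with the $\delta$ in the statement, and arranging the Harnack chain of controlled length between $y_{0}$ and a deep corkscrew of $A^{-1}B_{0}$ so that $A_{0}$ is universal.
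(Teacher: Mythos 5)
Your argument implicitly treats $\Omega$ as if it were NTA/uniform, which is not among the hypotheses of the lemma, and this creates genuine gaps in several places. The lower content regular complement condition controls the size of $\Omega^{c}$ in each boundary ball, not the size of $\Omega$ itself, so it does \emph{not} produce interior corkscrew points for $\Omega$; nothing in the hypotheses gives a uniform interior corkscrew in $B_{0}\setminus\frac{1}{2}B_{0}$. Likewise, the Harnack chain you invoke to pass from $y_{0}$ to a deep corkscrew $z_{0}$ of $A^{-1}B_{0}$ (to verify $\omega^{y_{0}}(A^{-1}B_{0}^{\circ})\geq A_{0}$) requires a Harnack chain condition that is not assumed; note that Lemma \ref{l:bigD-s=d} uses the Aikawa--Hirata level-set trick precisely to \emph{manufacture} such a chain out of the hypothesis $\omega^{x}(A^{-1}B^{\circ})\geq A_{0}$, so verifying that hypothesis by Harnack-chaining is circular. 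Finally, the two-sided change-of-pole comparison $\omega^{p}(E)\sim\omega^{p}(aB_{0})\,\omega^{y_{0}}(E)$ is a CFMS-type estimate that holds for NTA domains; for a general CDC domain with Ahlfors regular boundary it is not available (indeed its failure is closely tied to the failure of doubling of harmonic measure in non-uniform domains).

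The paper's proof is built to avoid exactly these issues. After reducing to the case $\omega^{p}(\tfrac{1}{10}B_{0})\geq M_{1}^{-1}a^{-s}\omega^{p}(aB_{0})$, it applies the strong Markov property on the \emph{sphere} $\d B_{0}$ rather than on $\d\Omega$, writing $\omega^{p}(\tfrac{1}{10}B_{0})=\int_{\d B_{0}\cap\Omega}\omega^{x}(\tfrac{1}{10}B_{0})\,d\tilde{\omega}^{p}(x)$ with $\tilde{\omega}=\omega_{\Omega\setminus B_{0}}$. The boundary H\"older estimate kills the contribution of points of $\d B_{0}$ within distance $\ve$ of $\d\Omega$ once the Bourgain/maximum-principle bound $\tilde{\omega}^{p}(\d B_{0}\cap\Omega)\lec M_{1}a^{s}\omega^{p}(\tfrac{1}{10}B_{0})$ is in hand, and a pigeonhole over a bounded cover of the $\ve$-deep points of $\d B_{0}$ produces a ball $B'$ centered on $\d B_{0}$ with $2B'\subseteq\Omega$, $r_{B'}=\ve/2$, satisfying both $\tilde{\omega}^{p}(B')\gec_{\ve,M_{1}}\omega^{p}(\tfrac{1}{10}B_{0})$ and $\omega^{x_{B'}}(\tfrac{1}{10}B_{0})\geq t$. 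This simultaneously yields a corkscrew point for $aB_{0}$ (with constant $\ve/(2a)$, hence $M_{1}$-dependent, which is fine since $\delta$ may depend on $M_{1}$) and the hypothesis needed for Lemma \ref{l:bigD-s=d}. The final transfer to $\omega^{p}$ uses only the one-sided bound $\omega^{p}(B)\geq\int_{B'\cap\d B_{0}}\omega^{x}(B)\,d\tilde{\omega}^{p}(x)\gec\tilde{\omega}^{p}(B')\,\omega^{x_{B'}}(B)$, again from the strong Markov property and Harnack within $B'$, rather than a full change-of-pole identity.
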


\def\tomega{\tilde{\omega}}
\begin{proof}
Without loss of generality, $B_0=\bB$. We can also assume that 
\begin{equation}
\label{e:dubassum}
\omega^{p}\ps{\frac{1}{10}\bB}\geq M_1^{-1}a^{-s} \omega^{p}(a\bB),
\end{equation}
otherwise we'd choose $B=\frac{1}{10}\bB$.

Let $\ve>0$ and 
\[
E=\{x\in \d B\cap \Omega: \delta_{\Omega}(x)>\ve\}.\]
Let $\tomega=\omega_{\Omega\backslash \bB}$. By the Strong Markov property\footnote{This follows from the Brownian motion definition of harmonic measure, but for a direct proof, see the appendix in \cite{AAM16}.} and for $\ve>0$ small enough,
\begin{align}
\label{e:wp/10}
\omega^{p}\ps{\frac{1}{10}\bB}
& =\int_{\d\bB\cap \Omega} \omega^{x}\ps{\frac{1}{10}\bB}d\tomega^{p}(x)\\
& \stackrel{\eqref{e:holder}}{\leq} C\ve^{\alpha}\tomega^{p}(\d\bB\cap \Omega)+\int_{E} \omega^{x}\ps{\frac{1}{10}\bB}d\tomega^{p}(x).\notag 
\end{align}
By Lemma \ref{l:bourgain}, $\omega^{x}(a\bB)\gec 1$ on $\d\bB\cap \Omega$, so by the maximum principle,
\begin{equation}
\label{e:maximum}
\tomega^{p}(\d\bB\cap \Omega) 
\lec \omega^{p}(a\bB) 
\stackrel{\eqref{e:dubassum}}{\leq} M_1a^{s}\omega^{p}(\frac{1}{10}\bB).
\end{equation}
So for $\ve>0$ small enough depending on $M_1$ and $a$, by \eqref{e:wp/10} and \eqref{e:maximum},
\begin{equation}
\label{e:omega1/2B}
\omega^{p}\ps{\frac{1}{10} \bB}
\leq 2\int_{E} \omega^{x}\ps{\frac{1}{10}\bB}d\tomega^{p}(x)
 .
\end{equation}
Let $B_j$ be a covering of $E$ by boundedly many balls centered on $E$ of radius $\ve/4$ (whose total number depends only on $\ve$ and $d$), so $2B_{j}\subseteq \Omega$. We claim we there are $t>0$ (depending on $\ve$ and $M_1$) and $j$ so that if $B'=B_{j}$, then
\[
\tomega^{p}(B')\geq t \omega^{p}\ps{\frac{1}{10} \bB} \mbox{ and }\omega^{x_{B'}}\ps{\frac{1}{10}\bB}\geq t.
\]
If not, then for each $j$ either $\tomega^{p}(B_j)< t \omega^{p}(\frac{1}{10} \bB)$ (let $J_{1}$ denote the set of these $j$) or $\omega^{x_{B_j}}(\frac{1}{10}\bB)< t$ (let $J_{2}$ denote the set of these $j$). For $j\in J_{2}$, Harnack's principle implies $\omega^{x}(\frac{1}{10}\bB)\lec  t$ for all $x\in B_j$. These alternatives and the fact that harmonic measure is at most $1$ imply
\begin{align*}
\omega^{p}\ps{\frac{1}{10}\bB}
& \stackrel{\eqref{e:omega1/2B}}{\lec} \ps{\sum_{j\in J_{1}}+\sum_{j\in J_{2}}} \int_{B_j\cap d\bB} \omega^{x}\ps{\frac{1}{10}\bB}d\tomega^{p}(x)\\
& \lec_{\ve} \sum_{j\in J_{1}}1\cdot  \tomega^{p}(B_j\cap \d \bB) + \sum_{j\in J_{2}} t \cdot \tomega^{p}(B_{j}\cap \d \bB) \\
& 
\lec_{\ve} t\omega\ps{\frac{1}{10} \bB}+t \tomega^{p}(\d \bB\cap \Omega) 
\end{align*}
which contradicts \eqref{e:maximum} for $t$ small enough.

Thus, we have a ball $B'$ centered on $\d\bB$ with $2B'\subseteq \Omega$ and $r_{B}=\ve/2$, and such that 
\begin{equation}
\label{e:tomegabig}
\tomega^{p}(B')\gec_{\ve,M_1} \omega^{p}\ps{\frac{1}{10}\bB}\stackrel{\eqref{e:dubassum}}{\geq} M_1^{-1}a^{-s} \omega^{p}(a\bB).
\end{equation}
and $\omega^{x_{B'}}(\frac{1}{10}\bB)\geq t$. Let $M>0$. Now since $B'\subseteq \bB\subseteq a\bB$, $x_{B'}$ is a $\frac{\ve}{2a}$-corkscrew point for $a\bB$, and so Lemma \ref{l:bigD-s=d} (applied with $aB$ in place of $B$, $A=10a$, $a^{s}M$ in place of $M$,  and $A_0=t$) implies there is $\delta>0$ depending on $M,d,C_{1}$, and $\beta$ and $B\subseteq 5A^{-1}(a\bB) = \frac{1}{2}\bB$ with $r_{B}\geq \delta$ so that $\Theta_{\omega^{x_{B'}}}^{s}(B)> M$. 
By Harnack's inequality, $\Theta_{\omega^{x}}^{s}(B)\gec M$ for all $x\in B'$. Thus,
\begin{multline*}
\omega^{p}(B)
=\int_{\d\bB\cap \Omega} \omega^{x}(B)d\tomega^{p}(x)
\geq \int_{B'\cap \d\bB} \omega^{x}(B)d\tomega^{p}(x)\\
\gec\tomega^{p}(B')Mr_{B}^{s}
\stackrel{\eqref{e:tomegabig}}{\gec}_{\ve,M_1,a} \omega^{p}(a\bB)Mr_{B}^{s}. 
\end{multline*}
Thus, for $M$ large enough (depending on $\ve$ and $M_1$),  we have 
\[
\Theta_{\omega^{p}}^{s}(B)=\omega^{p}(B)r_{B}^{-s}>M_1a^{-s}\omega^{p}(a\bB) =M_1 \Theta_{\omega^{p}}^{s}(a\bB),
\] which proves the lemma.

\end{proof}

\section{Proof of the \maintheorem}

Let $C_{1},\beta,s,d,\beta,$ and $\Omega\subseteq \R^{d+1}$ be as in the \maintheorem where $s_0$ is the constant from Lemma \ref{l:dip}, and set $\omega=\omega_{\Omega}^{p}$ for some $p\in \Omega$. \\

We recall the following version of ``dyadic cubes" for metric spaces, first introduced by David \cite{Dav88} but generalized in \cite{Chr90} and \cite{HM12}.

 \begin{theorem}
Let $X$ be a doubling metric space. Let $X_{k}$ be a nested sequence of maximal $\rho^{k}$-nets for $X$ where $\rho<1/1000$ and let $c_{0}=1/500$. For each $n\in\bZ$ there is a collection $\cD_{k}$ of ``cubes,'' which are Borel subsets of $X$ such that the following hold.
\begin{enumerate}
\item For every integer $k$, $X=\bigcup_{Q\in \cD_{k}}Q$.
\item If $Q,Q'\in \cD=\bigcup \cD_{k}$ and $Q\cap Q'\neq\emptyset$, then $Q\subseteq Q'$ or $Q'\subseteq Q$.
\item For $Q\in \cD$, let $k(Q)$ be the unique integer so that $Q\in \cD_{k}$ and set $\ell(Q)=5\rho^{k(Q)}$. Then there is $\zeta_{Q}\in X_{k}$ so that
\begin{equation}\label{e:containment}
B_{X}(\zeta_{Q},c_{0}\ell(Q) )\subseteq Q\subseteq B_{X}(\zeta_{Q},\ell(Q))
\end{equation}
and $ X_{k}=\{\zeta_{Q}: Q\in \cD_{k}\}$.
\end{enumerate}
\label{t:Christ}
\end{theorem}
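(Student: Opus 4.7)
My plan is to prove the theorem via the classical Christ-type construction (\cite{Chr90}, extended to metric spaces in \cite{HM12}) applied to the nested nets. The cubes will be the fibres of a hierarchical assignment of each $x \in X$ to a centre in $X_k$ at every level $k$.

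First I would fix a total well-ordering on the countable (by doubling and separability) set $\bigsqcup_{k} X_{k}$ and use it to define a parent map $\pi: X_{k} \to X_{k-1}$: set $\pi(\zeta) = \zeta$ if $\zeta \in X_{k-1}$ (possible by the nesting assumption), and otherwise let $\pi(\zeta)$ be the minimal element of $X_{k-1}$ realizing $\dist(\zeta, X_{k-1})$, which is at most $\rho^{k-1}$ by maximality of $X_{k-1}$. Iterating yields $\pi^{j-k}: X_{j} \to X_{k}$ with $d(\eta, \pi^{j-k}(\eta)) \leq \sum_{i=k+1}^{j}\rho^{i-1} \leq \rho^{k}/(1-\rho)$. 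Then, for each $x \in X$ and large $j$, let $\nu_{j}(x) \in X_{j}$ be the minimal nearest neighbour of $x$, so $d(x, \nu_{j}(x)) \leq \rho^{j}$. I would define $\lambda_{k}(x) \in X_{k}$ to be the eventual value of $\pi^{j-k}(\nu_{j}(x))$ as $j \to \infty$; stabilization follows because doubling confines the sequence to a bounded (hence finite) subset of $X_{k}$, and the drift bound above, together with the $\rho^{k}$-separation of $X_{k}$, forces eventual constancy. Finally set $Q_{\zeta}^{k} := \{ x \in X : \lambda_{k}(x) = \zeta\}$; this is a Borel set by construction.

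Properties (1) and (2) are immediate: every point receives a label at every level, and parent-consistency $\lambda_{k-1}(x) = \pi(\lambda_{k}(x))$ means that if $x \in Q_{\zeta}^{k} \cap Q_{\zeta'}^{k'}$ with $k \leq k'$, then $\pi^{k'-k}(\zeta') = \zeta$, forcing all of $Q_{\zeta'}^{k'}$ into $Q_{\zeta}^{k}$. The outer half of (3) is also straightforward: for $x \in Q_{\zeta}^{k}$ and $j$ large, the triangle inequality gives $d(x, \zeta) \leq d(x, \nu_{j}(x)) + d(\nu_{j}(x), \zeta) \leq \rho^{j} + \rho^{k}/(1-\rho) < 2\rho^{k} \leq 5\rho^{k} = \ell(Q)$, well inside $B(\zeta, \ell(Q))$.

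The main obstacle, and the reason for the smallness assumption $\rho < 1/1000$, is the inner inclusion $B(\zeta, c_{0}\ell(Q)) \subseteq Q_{\zeta}^{k}$ with $c_{0} = 1/500$. Given $x$ with $d(x, \zeta) \leq c_{0}\ell(Q) = \rho^{k}/100$, I must show $\lambda_{k}(x) = \zeta$. The key observation is that by nesting $\zeta \in X_{j}$ for every $j \geq k$, so $\zeta$ is always a candidate for $\nu_{j}(x)$ and $d(\nu_{j}(x), \zeta) \leq 2 d(x, \zeta) \leq \rho^{k}/50$. Tracking the parent chain from level $j$ down to level $k+1$, the accumulated drift is at most $\sum_{i=k+2}^{j}\rho^{i-1} \leq \rho^{k+1}/(1-\rho) < \rho^{k}/500$ when $\rho < 1/1000$, so the ancestor $\eta_{k+1} \in X_{k+1}$ still satisfies $d(\eta_{k+1}, \zeta) < \rho^{k}/50 + \rho^{k}/500 < \rho^{k}/40$. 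Since $X_{k}$ is $\rho^{k}$-separated, any other $\zeta' \in X_{k}$ satisfies $d(\eta_{k+1}, \zeta') \geq \rho^{k} - \rho^{k}/40 > \rho^{k}/40 > d(\eta_{k+1}, \zeta)$; hence $\zeta$ is uniquely nearest, $\pi(\eta_{k+1}) = \zeta$, and therefore $\lambda_{k}(x) = \zeta$. This completes the ball sandwich with the advertised constants and proves the theorem.
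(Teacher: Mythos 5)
Your overall skeleton (a parent map on the nested nets, labels $\lambda_{k}(x)$ obtained by following ancestors of nearby fine net points, cubes as fibres) is the standard one, and your verification of the inner inclusion $B(\zeta,c_{0}\ell(Q))\subseteq Q$ via the separation of $X_{k}$ and the geometric drift bound is correct. But there is a genuine gap at the very foundation of the construction: the claim that $\pi^{j-k}(\nu_{j}(x))$ is \emph{eventually constant} in $j$. Your argument only shows that for large $j$ these ancestors all lie in a ball of radius roughly $\rho^{k}/(1-\rho)$ around $x$, hence form a finite subset of $X_{k}$; finiteness of the range does not give convergence. A maximal $\rho^{k}$-net can have several points within distance $1.01\,\rho^{k}$ of $x$, and for $x$ lying near the interface between two level-$k$ ``cells'' the sequence can genuinely oscillate forever: the last step of the parent chain (from level $k+1$ to level $k$) has size up to $\rho^{k}$, so two fine net points both within $\rho^{j}$ of $x$ may have level-$(k+1)$ ancestors on opposite sides of the relevant Voronoi boundary and hence different level-$k$ ancestors, and the nested nets can be chosen so that $\nu_{j}(x)$ alternates between such points infinitely often. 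Your inner-ball computation shows stabilization only for $x$ within $\rho^{k}/100$ of some $\zeta\in X_{k}$, which is far from all of $X$. As written, $\lambda_{k}(x)$ is therefore not well defined on all of $X$, so properties (1) and (2) (and Borel measurability of the fibres) are not established.

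This is precisely the point where the known proofs need a different device rather than a pointwise limit over $j$: one builds the hierarchy on the net points only (each $\eta\in X_{j}$ gets one parent, once and for all), forms closed ``cubes'' as closures of descendant sets, and then produces an exact nested Borel partition by a disjointification that is made coherent across all levels using a fixed ordering of the centers (this is the content of \cite{Chr90} and, in the generality of geometrically doubling metric spaces, \cite{HM12}; see also \cite{Dav88}). Note the paper itself does not prove Theorem \ref{t:Christ}; it quotes it from these references. A repair of your argument along your own lines is possible but requires replacing ``the eventual value'' by, say, the selection of a coherent ancestor chain for each $x$ (such chains exist by a compactness/K\"onig-type argument since each level offers finitely many admissible candidates), chosen consistently across levels (e.g.\ lexicographically minimal chains), after which your inner-ball and separation estimates do show that every point of $B(\zeta,c_{0}\ell(Q))$ must be assigned to $\zeta$; without such a selection argument the proof does not go through.
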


Let $\cD$ be the Christ-David cubes for $\d\Omega$. Fix $n_0$ so that $x_{0}\not\in aB_{Q}$ for all $Q\in \cD_{n_0}$. By rescaling, we can assume without loss of generality that $n_0=0$. 

Let 
\[
\Theta_{\omega}^{s}(Q)=\frac{\omega(Q)}{\sigma(Q)}.
\]
\begin{lemma}
\label{l:dyadic-dip}
Let $n\geq 0$, $M_{2}>0$, and $Q\in \cD_{n}$. There is $N_{Q}\in \bN$ so that $N_{Q}\lec_{M_{2},C_{1},\beta,d} 1$ and such that there is $Q'\in \cD_{n+N_{Q}}$ contained in $\frac{c_{0}}{2}B_Q$ so that
\[\Theta_{\omega}^{s}(Q')\not\in [M_2^{-1}\Theta_{\omega}^{s}(Q),M_2\Theta_{\omega}^{s}(Q)].
\]
\end{lemma}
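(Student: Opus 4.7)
The plan is to apply Lemma \ref{l:dip} to a small concentric ball inside $\frac{c_0}{2}B_Q$ and argue by contradiction: if every $Q^{*}\in\cD_{n+N_Q}$ contained in $\frac{c_0}{2}B_Q$ had $\Theta_{\omega}^{s}(Q^{*})$ within a factor $M_2$ of $\Theta_{\omega}^{s}(Q)$, then Ahlfors regularity of $\sigma$ would force $\omega$ to behave like $\Theta_{\omega}^{s}(Q)\cdot\sigma$ on every sub-region of $\frac{c_0}{2}B_Q$, so in particular $\Theta_{\omega}^{s}(B)$ and $\Theta_{\omega}^{s}(aB_0)$ would be comparable, contradicting the conclusion of Lemma \ref{l:dip} once $M_1\gec M_2^2$.

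Concretely, I would first set $B_0:=B(\zeta_Q,\,c_0\ell(Q)/(4a))$, so that both $aB_0=\frac{c_0}{4}B_Q$ and $\frac{1}{2}B_0\subseteq\frac{c_0}{8a}B_Q$ sit comfortably inside $\frac{c_0}{2}B_Q$, while $p\notin aB_Q\supseteq aB_0$ by the choice $n_0=0$. Applying Lemma \ref{l:dip} with a parameter $M_1$ to be fixed of size $\gec M_2^2$ gives a ball $B\subseteq\frac{1}{2}B_0$ centered on $\d\Omega$, of radius at least $\delta r_{B_0}$, and with
\[
\Theta_{\omega}^{s}(B)\notin\bk{M_1^{-1}\Theta_{\omega}^{s}(aB_0),\,M_1\Theta_{\omega}^{s}(aB_0)}.
\]
I then pick a single integer $N_Q\lec_{M_2,C_1,\beta,d}1$ so that $\ell(Q^{*})=5\rho^{n+N_Q}$ is simultaneously (i) comparable to $r_B$, (ii) much smaller than $r_{aB_0}$, and (iii) small enough that a triangle-inequality check forces every cube at level $n+N_Q$ meeting either $B$ or $aB_0$ to lie inside $\frac{c_0}{2}B_Q$. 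A cube $Q^{*}_0\subseteq B$ at this level exists by taking the Christ--David cube around a point of the net $X_{n+N_Q}$ close to $\zeta_B$.

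For the contradiction, assume every $Q^{*}\in\cD_{n+N_Q}$ with $Q^{*}\subseteq\frac{c_0}{2}B_Q$ satisfies $\Theta_{\omega}^{s}(Q^{*})\in[M_2^{-1}\Theta_{\omega}^{s}(Q),\,M_2\Theta_{\omega}^{s}(Q)]$. Summing this bound over all cubes at level $n+N_Q$ meeting $B$ (each inside $\frac{c_0}{2}B_Q$ by (iii)) and using Ahlfors regularity of $\sigma$, so that $\sum\sigma(Q^{*})\sim r_B^{s}$ with the boundary-layer cubes contributing at most a constant fraction by (ii), yields $\omega(B)\in[C^{-1}M_2^{-1}\Theta_{\omega}^{s}(Q)r_B^{s},\,CM_2\Theta_{\omega}^{s}(Q)r_B^{s}]$. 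The same computation at the scale of $aB_0$ gives the analogous two-sided bound at $aB_0$. Taking ratios, $\Theta_{\omega}^{s}(B)/\Theta_{\omega}^{s}(aB_0)\in[C^{-2}M_2^{-2},\,C^{2}M_2^{2}]$ for a constant $C$ depending only on $C_1$ and $d$; choosing $M_1>C^{2}M_2^{2}$ therefore contradicts Lemma \ref{l:dip}. Hence some $Q'\in\cD_{n+N_Q}$ inside $\frac{c_0}{2}B_Q$ must satisfy $\Theta_{\omega}^{s}(Q')\notin[M_2^{-1}\Theta_{\omega}^{s}(Q),\,M_2\Theta_{\omega}^{s}(Q)]$.

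The main obstacle I anticipate is the geometric bookkeeping enforcing (i)--(iii) together with the boundary-layer estimate for the cube sums, where the constants have to be tracked with some care. It is convenient (and harmless, since $b$ in Lemma \ref{l:bourgain} may be replaced by any smaller admissible constant) to take $a=2b^{-1}$ to be a large multiple of $1/c_0$, so that the mid-scale ball $aB_0=\frac{c_0}{4}B_Q$ lies well inside $\frac{c_0}{2}B_Q$ with room for the cube boundary layers, and so that cubes of size $\sim r_B$ remain small relative to $r_{aB_0}$. Once that geometry is pinned down, the rest of the argument is simply the two-sided ratio estimate above combined with the choice $M_1\gec M_2^{2}$.
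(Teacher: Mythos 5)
Your proposal is correct, but it takes a genuinely different route from the paper. The paper argues \emph{directly}: it picks a concentric cube $\tilde Q$ a fixed number of generations down so that $aB_{\tilde Q}\subset\frac{c_0}{2}B_Q$, handles the easy case $\Theta_\omega^s(\tilde Q)<M_2^{-1}\Theta_\omega^s(Q)$ outright, then applies Lemma \ref{l:dip} with $B_0=B_{\tilde Q}$ and extracts the cube $Q'$ explicitly by a case analysis on whether $\Theta_\omega^s(B)$ is small (take the largest cube inside $B$ through its center) or large (cover $B\cap\d\Omega$ by boundedly many cubes at scale $\sim\ell(\tilde Q)$ and take the one carrying a definite fraction of $\omega(B)$). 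You instead argue by \emph{contradiction}: assume every level-$(n+N_Q)$ cube inside $\frac{c_0}{2}B_Q$ has density within a factor $M_2$ of $\Theta_\omega^s(Q)$, sum this bound over the cubes covering $B$ and over the cubes covering $aB_0$ to pin both $\Theta_\omega^s(B)$ and $\Theta_\omega^s(aB_0)$ within a factor $CM_2^2$ of each other, and then take $M_1\gg M_2^2$ to contradict Lemma \ref{l:dip}. This trades the paper's explicit two-case construction for a single covering/summation argument, at the price of having to be slightly careful with the covering estimates.

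Two small points worth tightening. First, the ``boundary-layer'' justification for the two-sided bound on $\omega(aB_0)$ is looser than it needs to be: the lower bound $\sum_{Q^*\subseteq aB_0}\sigma(Q^*)\gtrsim r_{aB_0}^s$ does not follow from Ahlfors regularity plus smallness of the layer alone (the measure of a thin spherical annulus need not be small for a general $s$-regular set); the clean way is to take one intermediate Christ--David cube $\hat Q\subseteq aB_0$ with $\sigma(\hat Q)\gtrsim r_{aB_0}^s$ and use that the level-$(n+N_Q)$ cubes partition $\hat Q$. Second, the order of quantifiers is important and your proposal handles it correctly: you fix $M_1\gtrsim M_2^2$ with a constant depending only on $d,C_1$ \emph{before} invoking Lemma \ref{l:dip}, so $\delta=\delta(M_1,d,C_1,\beta)$ and hence $N_Q$ is bounded purely in terms of $M_2,C_1,\beta,d$, as the statement requires. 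This is actually a point in favor of your route, since it sidesteps the need to track how the auxiliary ball's radius enters the constants in the ``large density'' case.
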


\begin{proof}
Fix $N\in \bN$ large enough so that if $\tilde{Q}\in \cD_{n+N}$ is the cube with same center as $Q$, then
\begin{equation}
\label{e:aBinB_Q}
aB_{\tilde{Q}}\subseteq \frac{c_{0}}{2}B_{Q}
\end{equation}
where $a$ is as in Lemma \ref{l:dip}. Since $N$ is fixed and only depends on some fixed universal constants, we will not indicate when constants depend on it below. Clearly we can assume 
\begin{equation}
\label{e:dyadic-dubassum}
\Theta_{\omega}^{s}(\tilde{Q})\geq M_{2}^{-1}\Theta_{\omega}(Q),
\end{equation}
otherwise we'd pick $Q'=\tilde{Q}$. Thus,
\begin{equation}
\label{e:density-chain}
\Theta_{\omega}^{s}(B_{\tilde{Q}})
\stackrel{\eqref{e:containment}}{\gec}_{C_{1}} \Theta_{\omega}^{s}(\tilde{Q})
\stackrel{\eqref{e:dyadic-dubassum}}{\geq }M_{2}^{-1}\Theta_{\omega}^{s}(Q) \stackrel{\eqref{e:aBinB_Q}}{\gec}_{N,C_{1}} M_{2}^{-1}\Theta_{\omega}^{s}(aB_{\tilde{Q}}),
\end{equation}
so by Lemma \ref{l:dip}, for $M_1>0$ there is $\delta$ depending on $M_1,M_2,d,C_{1},\beta$ and $N$,  and there is $B\subseteq \frac{1}{2}B_{\tilde{Q}}$ centered on $\d\Omega$ (and thus also centered on $\tilde{Q}$) with $r_{B}\geq \delta \ell(\tilde{Q})$ for which 
\[
\Theta_{\omega}^{s}(B)\not\in [ M_{1}^{-1} \Theta_{\omega}^{s}(aB_{\tilde{Q}}),M_{1} \Theta_{\omega}^{s}(aB_{\tilde{Q}})].
\]
Suppose first that $\Theta_{\omega}^{s}(B)< M_{1}^{-1} \Theta_{\omega}^{s}(aB_{\tilde{Q}})$. Then we take $Q'$ to be the largest cube containing the center of $B$ that is also contained in $B$, so 
\begin{equation}
\label{e:Q'<deltaQ}
\ell(Q')\sim r_{B}\gec \delta \ell(\tilde{Q})
\end{equation}
Furthermore,
\[
\Theta_{\omega}^{s}(Q')
\stackrel{\eqref{e:containment}\atop \eqref{e:Q'<deltaQ}}{\lec}_{C_{1}} \Theta_{\omega}^{s}(B)
<M_{1}^{-1}\Theta_{\omega}^{s}(aB_{\tilde{Q}})\stackrel{\eqref{e:density-chain}}{\lec}_{C_{1}} \frac{M_{2}}{M_{1}} \Theta_{\omega}^{s}(\tilde{Q})
\]
and the lemma follows in this case by picking $M_{1}\gg M_{2}^{2}$. 

Now suppose that $\Theta_{\omega}^{s}(B) \geq M_{1} \Theta_{\omega}^{s}(aB_{\tilde{Q}})$. Let $N'$ be the largest integer for which $5\rho^{N'}< \ell(\tilde{Q})/4$. Then there are at most boundedly many cubes in $\cD_{N'}$ which cover $B\cap \d\Omega$, and one of them, call it $Q'$, must have $\omega(Q')\gec \omega(B)$. Since $Q'\cap B\neq\emptyset$ and $\ell(Q')=5\rho^{N'}<\ell(\tilde{Q})/4$, and $B\subseteq \frac{1}{2} B_{\tilde{Q}}$, we have  
 \[
 Q'\subseteq B_{\tilde{Q}} \stackrel{\eqref{e:containment}}{\subseteq} \frac{c_{0}}{2} B_{Q}.
 \]
 Also, we have $\ell(Q')\sim_{\delta} \ell(\tilde{Q})\sim_{N}\ell(Q)$ and
 \[
 \Theta_{\omega}^{s}(Q')
 \gec \Theta_{\omega}^{s}(B)
 \geq M_{1} \Theta_{\omega}^{s}(aB_{\tilde{Q}})
 \gec_{C_{1}} M_{1}\Theta_{\omega}^{s}(\tilde{Q})\stackrel{\eqref{e:density-chain}}{\geq} \frac{M_{1}}{M_{2}}\Theta_{\omega}^{s}(Q).
 \]
 Again, the lemma follows by picking $M_{1}\gg M_{2}^{2}$. In either case, since $\delta$ always depends on $d,\beta,C_{1}, M_1$ and $M_2$ (and because $M_1$ depends on $d,\beta,C_1,$ and $M_2$), we have $\ell(Q')\gec_{d,\beta,C_{1},M_{2}} \ell(Q)$, so if $N_{Q}$ is such that $Q'\in \cD_{n+N_{Q}}$, then $N_{Q}\lec_{d,\beta,C_{1},M_{2}} 1$, and we're done.
\end{proof}

We now let $\ve>0$ be small and let $N_{Q}$ denote the integer from the previous lemma applied when $M_{2}=\ve^{-1}$. Fix a cube $Q_{0}\in \cD_{0}$ and define families of sub-cubes of $Q_{0}$ in $\cD_{n}'$ inductively as follows. First let $\cD_{0}'=\{Q_{0}\}$, then if $\cD_{n}'$ has been defined and $R\in \cD_{n}'$, and $n_R$ is so that $R\in \cD_{n_R}$, let
\def\Ch{{\rm Ch}}
\[
\Ch(R)=\{Q\in \cD_{n_R+N_{R}}: Q\subseteq R\}
\]
and 
\[\cD_{n+1}'=\bigcup_{R\in \cD_{n}'} \Ch(R), \;\;\; \cD'=\bigcup \cD_{n}'.\]

\begin{lemma}
There is $\lambda=\lambda(d,C_{1},\beta)\in (0,1)$ so that for all $R\in \cD'$,
\begin{equation}
\label{e:bourgain-drop}
\sum_{Q\in \Ch(R)}\omega(Q)^{\frac{1}{2}} \sigma(Q)^{\frac{1}{2}}<\lambda \omega(R)^{\frac{1}{2}}\sigma(R)^{\frac{1}{2}}. 
\end{equation}
\end{lemma}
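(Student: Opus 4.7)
The plan is to reduce the estimate \eqref{e:bourgain-drop} to a quantitative improvement over Cauchy--Schwarz, with the improvement coming directly from the density discrepancy produced by Lemma \ref{l:dyadic-dip}. First I would fix once and for all a value $\varepsilon\in(0,1)$ (for instance $\varepsilon=1/2$) and set $M_2=\varepsilon^{-1}$. Applying Lemma \ref{l:dyadic-dip} to $R$ with this $M_2$ produces an integer $N_R$, bounded above by some $N_{\max}=N_{\max}(d,C_1,\beta)$, and a cube $Q^*\in\cD_{n_R+N_R}$ with $Q^*\subseteq (c_0/2)B_R\subseteq R$ and
\[
\Theta_\omega^s(Q^*)\notin \bk{\varepsilon\,\Theta_\omega^s(R),\,\varepsilon^{-1}\Theta_\omega^s(R)}.
\]
In particular $Q^*\in\Ch(R)$, and since $\ell(Q^*)\geq \rho^{N_{\max}}\ell(R)$, the Ahlfors regularity of $\sigma$ combined with \eqref{e:containment} gives
\[
\frac{\sigma(Q^*)}{\sigma(R)} \;\geq\; \frac{c_0^{\,s}\rho^{sN_{\max}}}{C_1^{\,2}} \;=:\; \eta>0.
\]

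Next I would normalize by setting, for each $Q\in\Ch(R)$, $a_Q:=\sigma(Q)/\sigma(R)$ and $t_Q:=\Theta_\omega^s(Q)/\Theta_\omega^s(R)$. Because $\Ch(R)$ partitions $R$, one has $\sum_Q a_Q=1$ and $\sum_Q a_Q t_Q=\sum_Q \omega(Q)/\omega(R)=1$. Writing
\[
\omega(Q)^{1/2}\sigma(Q)^{1/2} \;=\; a_Q\, t_Q^{1/2}\, \omega(R)^{1/2}\sigma(R)^{1/2},
\]
the inequality \eqref{e:bourgain-drop} reduces to showing that $\sum_{Q\in\Ch(R)} a_Q t_Q^{1/2}<\lambda$ for some $\lambda\in(0,1)$. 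At this point I would invoke the identity
\[
\sum_{Q\in\Ch(R)} a_Q t_Q^{1/2} \;=\; 1-\tfrac{1}{2}\sum_{Q\in\Ch(R)} a_Q\,(t_Q^{1/2}-1)^2,
\]
which follows from expanding $(t_Q^{1/2}-1)^2=t_Q-2t_Q^{1/2}+1$ and using the two normalizations. Retaining only the summand $Q=Q^*$ on the right and applying $a_{Q^*}\geq\eta$ gives
\[
\sum_{Q\in\Ch(R)} a_Q t_Q^{1/2} \;\leq\; 1-\tfrac{\eta}{2}(t_{Q^*}^{1/2}-1)^2.
\]
In both cases $t_{Q^*}\leq\varepsilon$ and $t_{Q^*}\geq\varepsilon^{-1}$ one has $(t_{Q^*}^{1/2}-1)^2\geq (1-\varepsilon^{1/2})^2$ (in the latter, note $\varepsilon^{-1/2}-1=(1-\varepsilon^{1/2})/\varepsilon^{1/2}\geq 1-\varepsilon^{1/2}$ for $\varepsilon\in(0,1)$). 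We may therefore take $\lambda:=1-\tfrac{\eta}{4}(1-\varepsilon^{1/2})^2\in(0,1)$.

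I do not expect any serious obstacle here: all the nontrivial PDE-theoretic input has already been absorbed into Lemma \ref{l:dyadic-dip}, and what remains is the classical ``Bourgain trick'' that converts a single density gap into an $\ell^2$ deficit via the identity above. The two points that require care are (i) verifying that the cube produced by Lemma \ref{l:dyadic-dip} is genuinely a member of $\Ch(R)$, which is where the containment $(c_0/2)B_R\subseteq R$ from \eqref{e:containment} is used, and (ii) handling the two cases of the density discrepancy with a single uniform lower bound on $(t_{Q^*}^{1/2}-1)^2$; both are routine. All constants depend only on $d$, $C_1$, and $\beta$, as required.
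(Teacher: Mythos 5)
Your proof is correct, and it routes around the paper's argument through a cleaner algebraic device. The paper extracts the special cube $R'$ from Lemma \ref{l:dyadic-dip}, splits the sum over $\Ch(R)$ into $Q=R'$ and $Q\neq R'$, applies Cauchy--Schwarz to the latter to get $\omega(R)^{1/2}(\sigma(R)-\sigma(R'))^{1/2}$, bounds the $R'$ term directly using the density discrepancy, and then treats the two cases (density drop vs.\ density jump) separately via the elementary inequality $(1-x)^{1/2}\leq 1-x/2$. Your normalization $a_Q=\sigma(Q)/\sigma(R)$, $t_Q=\Theta_\omega^s(Q)/\Theta_\omega^s(R)$ together with the identity $\sum a_Q t_Q^{1/2}=1-\tfrac12\sum a_Q(t_Q^{1/2}-1)^2$ (valid because $\sum a_Q=\sum a_Q t_Q=1$) packages the Cauchy--Schwarz deficit exactly, so the two cases of the density discrepancy are handled by the single observation that $(t_{Q^*}^{1/2}-1)^2\geq(1-\varepsilon^{1/2})^2$ whenever $t_{Q^*}\notin[\varepsilon,\varepsilon^{-1}]$. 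This buys you a unified treatment (no case split), and it also works for any $M_2>1$, whereas the paper's $(1-x)^{1/2}\leq 1-x/2$ computation requires $M_2^{-1/2}<1/2$, hence $M_2>4$ (they take $M_2=16$). Two small things to tidy: first, your lower bound $\sigma(Q^*)/\sigma(R)\gtrsim c_0^{s}\rho^{sN_{\max}}/C_1^2$ still carries an $s$; you should replace the exponent by $d+1$ (using $s\leq d+1$ and $c_0\rho^{N_{\max}}<1$) so that $\eta$ genuinely depends only on $d,C_1,\beta$ — the paper's \eqref{e:sigmaratio} has the same wrinkle. Second, your check that $Q^*\in\Ch(R)$ (via $Q^*\subseteq(c_0/2)B_R\subseteq R$ and the nesting of Christ--David cubes) is a detail the paper passes over silently, so it is good that you made it explicit; note this requires that the $N_R$ defining $\Ch(R)$ is the one produced by Lemma \ref{l:dyadic-dip} with the same $M_2=\varepsilon^{-1}$, which your setup ensures.
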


\begin{proof}
Let $R'$ be the cube obtained in Lemma \ref{l:dyadic-dip} applied to $Q=R$ with $M_{2}=16$. Suppose first that 
\begin{equation}
\label{e:drop}
\Theta_{\omega}^{s}(R')<\frac{1}{16}\Theta_{\omega}^{s}(R).
\end{equation}
By the Cauchy-Schwartz inequality,
\begin{align*}
\sum_{Q\in \Ch(R)\atop Q\neq R'}\omega(Q)^{\frac{1}{2}}  \sigma(Q)^{\frac{1}{2}}
& \leq \ps{\sum_{Q\in \Ch(R)} \omega(Q)}^{\frac{1}{2}}\ps{\sum_{Q\in \Ch(R)\atop Q\neq R'}\sigma(Q)}^{\frac{1}{2}}\\
& = \omega(R)^{\frac{1}{2}}(\sigma(R)-\sigma(R'))^{\frac{1}{2}}.
\end{align*}
Also, by \eqref{e:drop},
\begin{align*}
\omega(R')^{\frac{1}{2}}\sigma(R')^{\frac{1}{2}}
& = \ps{\frac{\omega(R')}{\sigma(R')}}^{\frac{1}{2}} \sigma(R') 
<\ps{\frac{1}{16}}^{\frac{1}{2}} \ps{\frac{\omega(R)}{\sigma(R)}}^{\frac{1}{2}} \sigma(R') \\
& =\frac{1}{4}\omega(R)^{\frac{1}{2}}\sigma(R)^{\frac{1}{2}} \frac{\sigma(R') }{\sigma(R)}.
\end{align*}
These two estimates and the fact that $(1-x)^{\frac{1}{2}}\leq 1-\frac{x}{2}$ for all $x\leq 1$ imply that for $\ve$ small,
\begin{align*}
\sum_{Q\in \Ch(R)}\omega(Q)^{\frac{1}{2}} \sigma(Q)^{\frac{1}{2}}
& \leq \omega(R)^{\frac{1}{2}}(\sigma(R)-\sigma(R'))^{\frac{1}{2}}
+\frac{1}{4} \omega(R)^{\frac{1}{2}}\sigma(R)^{\frac{1}{2}} \frac{\sigma(R') }{\sigma(R)}\\
& =  \omega(R)^{\frac{1}{2}}\sigma(R)^{\frac{1}{2}}
\ps{ \ps{1-\frac{\sigma(R')}{\sigma(R)}}^{\frac{1}{2}} + \frac{1}{4}\frac{\sigma(R')}{\sigma(R)}}\\
& \leq \omega(R)^{\frac{1}{2}} \sigma(R)^{\frac{1}{2}}\ps{1-\frac{1}{4}\frac{\sigma(R')}{\sigma(R)}}
\end{align*}
Now \eqref{e:bourgain-drop} follows since, for $R\in \cD_{n}'$,  since $N_{R'}\lec_{\beta,C_{1},d}1$, we have
\begin{equation}
\label{e:sigmaratio}
\frac{\sigma(R')}{\sigma(R)}
\gec_{C_{1}} \frac{\ell(R')^{s} }{\ell(R)^{s}}
= \ps{ \frac{\ell(R')}{\ell(R)}}^{s}
\geq \ps{ \frac{\ell(R')}{\ell(R)}}^{d}
\gec_{C_{1},\beta,d} 1 .
\end{equation}
Hence, there is $t=t(C_{1},\beta,d)>0$ so that 
\[
1-\frac{1}{4}\frac{\sigma(R')}{\sigma(R)}
\leq 1-\frac{t}{4}=:\lambda<1
\]
and the lemma follows in this case.\\

Now suppose that
\begin{equation}
\label{e:bloop}
\Theta_{\omega}^{s}(R')>16 \Theta_{\omega}^{s}(R).
\end{equation}
By the Cauchy-Schwartz inequality,
\begin{align*}
\sum_{Q\in \Ch(R)\atop Q\neq R'}\omega(Q)^{\frac{1}{2}}  \sigma(Q)^{\frac{1}{2}}
& \leq \ps{\sum_{Q\in \Ch(R)\atop Q\neq R'} \omega(Q)}^{\frac{1}{2}}\ps{\sum_{Q\in \Ch(R)}\sigma(Q)}^{\frac{1}{2}}\\
& = (\omega(R)-\omega(R'))^{\frac{1}{2}}\sigma(R)^{\frac{1}{2}}.
\end{align*}

By \eqref{e:bloop},
\begin{align*}
\omega(R')^{\frac{1}{2}}\sigma(R')^{\frac{1}{2}}
& = \omega(R') \ps{\frac{\sigma(R')}{\omega(R')}}^{\frac{1}{2}} 
<\ps{\frac{1}{16}}^{\frac{1}{2}} \omega(R') \ps{\frac{\sigma(R)}{\omega(R)}}^{\frac{1}{2}} \\
& =\frac{1}{4}\omega(R)^{\frac{1}{2}}\sigma(R)^{\frac{1}{2}} \frac{\omega(R') }{\omega(R)}.
\end{align*}
Just as earlier, we have 
\begin{align*}
\sum_{Q\in \Ch(R)}\omega(Q)^{\frac{1}{2}} \sigma(Q)^{\frac{1}{2}}
& \leq (\omega(R)-\omega(R'))^{\frac{1}{2}}\sigma(R)^{\frac{1}{2}}
+\frac{1}{4} \omega(R)^{\frac{1}{2}}\sigma(R)^{\frac{1}{2}} \frac{\omega(R') }{\omega(R)}\\
& \leq  \omega(R)^{\frac{1}{2}}\sigma(R)^{\frac{1}{2}}
\ps{ \ps{1-\frac{\omega(R')}{\omega(R)}}^{\frac{1}{2}} + \frac{1}{4}\frac{\omega(R')}{\omega(R)}}\\
& \leq \omega(R)^{\frac{1}{2}} \sigma(R)^{\frac{1}{2}}\ps{1-\frac{1}{4}\frac{\omega(R')}{\omega(R)}}
\end{align*}

Now we use the fact that 
\[
\frac{\omega(R')}{\omega(R)}
\stackrel{\eqref{e:sigmaratio}}{\sim}_{C_{1},\beta,d} \frac{\Theta_{\omega}^{s}(R')}{\Theta_{\omega}^{s}(R)}
\stackrel{\eqref{e:bloop}}{\geq}  1.
\]
Thus, there is $t=t(C_{1},\beta,d)>0$ so that $\frac{\omega(R')}{\omega(R)}>t$. Hence, we again have 
\[
1-\frac{1}{4}\frac{\omega(R')}{\omega(R)}
\leq 1-\frac{t}{4}=:\lambda<1
\]
and again the lemma follows.

\end{proof}
Let $R\in \cD'$. Then for $Q\in \Ch(R)$ and for some $c=c(\beta,d,C_{1})$,
\[
\sigma(Q)\geq c \sigma(R).\]
Let $\tau\in (0,1) $ be small, we will fix its value later. Then
\begin{align*}
\sum_{Q\in \Ch(R)}\omega(Q)^{\frac{1}{2}} \sigma(Q)^{\frac{1-\tau}{2}}
& \leq \sigma(R)^{-\frac{\tau}{2}}c^{-\frac{\tau}{2}}\sum_{Q\in \Ch(R)}\omega(Q)^{\frac{1}{2}} \sigma(Q)^{\frac{1}{2}}\\
& \stackrel{\eqref{e:bourgain-drop}}{<} c^{-\frac{\tau}{2}}\lambda \omega(R)^{\frac{1}{2}}\sigma(R)^{\frac{1-\tau}{2}}
:=\gamma\omega(R)^{\frac{1}{2}}\sigma(R)^{\frac{1-\tau}{2}}
\end{align*}
Pick $\tau>0$ small enough so we still have that $\gamma:=c^{-\frac{\tau}{2}}\lambda<1$. 

Let 
\[
\cE_{n}=\{Q\in \cD_{n}': \omega(Q)\leq \sigma(Q)^{1-\tau}\}.
\]
Then
\begin{multline*}
\sum_{Q\in \cE_{n}}\omega(Q)
\leq \sum_{Q\in \cD_{n}'}\omega(Q)^{\frac{1}{2}}\sigma(Q)^{\frac{1-\tau}{2}}
 \leq \gamma\sum_{R\in \cD_{n-1}'}\omega(R)^{\frac{1}{2}}\sigma(R)^{\frac{1-\tau}{2}}<\cdots \\
\cdots  < \gamma^{n} \omega(Q_{0})^{\frac{1}{2} }\sigma(Q_{0})^{\frac{1-\tau}{2}}.
\end{multline*}
In particular, if
\[
E:=\ck{x\in Q_{0}:\lim_{r\rightarrow 0} \sup_{x\in Q\atop \ell(Q)<r}\frac{\omega(Q)}{\sigma(Q)^{1-\tau}}\leq 1}
\subseteq \bigcap_{n\geq 0} \bigcup_{Q\in \cE_{n}}Q,
\]
then
\[
\omega(E)\leq \lim_{k\rightarrow\infty}\sum_{n=k}^{\infty} \gamma^{n} \omega(Q_{0})^{\frac{1}{2}} \sigma(Q_{0})^{\frac{1-\tau}{2}}=0.
\]
Thus, if 
\[
F_{Q_{0}}:=\lim_{r\rightarrow 0} \sup_{x\in Q\atop \ell(Q)<r}\frac{\omega(Q)}{\sigma(Q)^{1-\tau}}\geq 1,
\]
then
\[
\omega(Q_{0}\backslash F_{Q_{0}})=0.
\]
Let $\ell>0$. Since the Christ-David cubes partition $Q_0$, for each $x\in F_{Q_{0}}$ we may find $Q_x\ni x$ contained in $Q_0$ with $\ell(Q_x)<\ell$ so that $\frac{\omega(Q_x)}{\sigma(Q_x)^{1-\tau}}> 1/2$. Let $Q_{j}$ be the collection of maximal cubes from $\{Q_{x}:x\in F_{Q_{0}}\}$. Then because the $Q_j$ are disjoint and $\diam Q_{j}\leq \diam B_{Q_{j}}=2\ell(Q_{j})<2\ell$,
\begin{align*}
\cH^{s(1-\tau)}_{2\ell}(F_{Q_{0}})
& \leq \sum_{j} (\diam Q_{j})^{s(1-\tau)}
\sim_{C_{1},\tau} \sum_{j} \sigma(Q_j)^{1-\tau}\\
& <2 \sum_{j} \omega(Q_j)\leq 2\omega(Q_0).
\end{align*}
Letting $\ell\rightarrow 0$ gives $\cH^{s(1-\tau)}(F_{Q_{0}})<\infty$.

Since our choice of $Q_{0}\in \cD_{0}$ was arbitrary and $\cD_{0}$ partitions $\d\Omega$, this implies $\dim \omega\leq s(1-\tau)$. This finishes the proof of the \maintheorem\!.

\bibliographystyle{alpha}
\newcommand{\etalchar}[1]{$^{#1}$}
\def\cprime{$'$}

\end{document}